\newtheorem{theorem}{Theorem}
\newtheorem{lemma}[theorem]{Lemma}
\newtheorem{proposition}[theorem]{Proposition}
\newtheorem{corollary}[theorem]{Corollary}
\newtheorem{example}[theorem]{Example}
\newtheorem{conjecture}[theorem]{Conjecture}
\newtheorem{question}[theorem]{Question}
\title{On Mubayi's Conjecture and conditionally intersecting sets}
\author{Adam Mammoliti, \thanks{
School of Mathematics and Statistics
UNSW Sydney
NSW 2052, Australia
}\\
\texttt{a.mammoliti@unsw.edu.au}
\and
Thomas Britz \footnotemark[1]\\
\texttt{britz@unsw.edu.au}
}
\begin{document}
\date{}
\maketitle
\begin{abstract}
Mubayi's Conjecture states
that if $\mathcal{F}$ is a family of $k$-sized subsets of $[n] = \{1,\ldots,n\}$
which, for $k \geq d \geq 3$ and $n \geq \frac{dk}{d-1}$, satisfies
$A_1 \cap\cdots\cap A_d \neq \emptyset$
whenever
$|A_1 \cup\cdots\cup A_d|~\leq~2k$
for all distinct sets $A_1,\ldots,A_d \in\mathcal{F}$,
then $|\mathcal{F}|\leq \binom{n-1}{k-1}$,
with equality occurring only if $\mathcal{F}$ is the family of all $k$-sized subsets
containing some fixed element.
This paper proves that Mubayi's Conjecture is true for all families
that are invariant with respect to shifting;
indeed, these families satisfy a stronger version of Mubayi's Conjecture.
Relevant to the conjecture,
we prove a fundamental bijective duality between
what we call $(i,j)$-unstable families and $(j,i)$-unstable families.
Generalising previous intersecting conditions,
we introduce the {\em $(d,s,t)$-conditionally intersecting} condition
for families of sets
and prove general results thereon.
We prove fundamental theorems on two $(d,s)$-conditionally intersecting
families that generalise previous intersecting families,
and we pose an extension of a previous conjecture by Frankl and F\"uredi.
Finally, we generalise a classical result by Erd\H{o}s, Ko and Rado
by proving tight upper bounds on the size of
$(2,s)$-conditionally intersecting families $\mathcal{F}\subseteq 2^{[n]}$
and by characterising the families that attain these bounds.
We extend this theorem
for sufficiently large $n$ to families $\mathcal{F}\subseteq 2^{[n]}$
whose members have at most a fixed size~$u$.
\end{abstract}

\bigskip\noindent \textbf{Keywords:}
Extremal set theory, intersecting sets,
the Erd\H{o}s-Ko-Rado~Theorem, Mubayi's Conjecture, unstable

\noindent {\bf MSC subject classifications: 05D05, 05C35, 05C65}

\section{Introduction}
\label{sec:introduction}
$ $\\
\noindent
Let $\mathcal{F}\subseteq\binom{[n]}{k}$ be a family of $k$-sized subsets of $[n]:=\{1,\ldots,n\}$.
The celebrated Erd\H{o}s-Ko-Rado~Theorem~\cite{MR0140419} states that if $n\geq 2k$ and
$A\cap B\neq \emptyset$ for all sets $A,B\in\mathcal{F}$,
then $|\mathcal{F}|\leq \binom{n-1}{k-1}$,
with equality when $n>2k$ occurring precisely when $\mathcal{F}$ is a {\em star},
i.e., when $\mathcal{F}$ is the family of all $k$-sized sets that contain a fixed element of $[n]$.
As Frankl~\cite{MR0398842} proved,
this theorem still holds when the intersection condition
$A\cap B\neq\emptyset$ is replaced by the more general condition that any $d$ sets of $\mathcal{F}$
have nonempty intersection, under the assumption that $n\geq dk/(d-1)$:
\begin{theorem}
\label{thm:MR0398842}
If $\mathcal{F}\subseteq \binom{[n]}{k}$ is a family of $k$-subsets of $[n]$
where $n\geq\frac{dk}{d-1}$ and $k\geq d\geq 3$
so that, for each $d$ sets $A_1,\ldots,A_d \in\mathcal{F}$,
\begin{equation}
\label{equ:MR0398842}
   A_1\cap\cdots\cap A_d \neq \emptyset\,,
\end{equation}
then
\[
  |\mathcal{F} | \leq \binom{n-1}{k-1} \,.
\]
Furthermore, equality holds if and only if $\mathcal{F}$ is a star.
\end{theorem}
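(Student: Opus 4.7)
The plan is to follow Frankl's approach via the shifting technique, carrying out the argument in three stages: reduce to a left-shifted family, prove that a shifted $d$-wise intersecting family with $n\geq dk/(d-1)$ is already a star, and deduce the bound together with the equality case.

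In the first stage I would iterate the $(i,j)$-shifts for all $i<j$ to produce a left-shifted family $\mathcal{F}^*$ with $|\mathcal{F}^*|=|\mathcal{F}|$. A standard swap-and-replace check shows that each shift preserves the $d$-wise intersection condition \eqref{equ:MR0398842}, since any $d$ sets violating the condition in the shifted family can be lifted to a violating $d$-tuple in the original. The same reasoning transfers the equality characterisation back from $\mathcal{F}^*$ to $\mathcal{F}$.

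In the main stage, I would show that every $A\in\mathcal{F}^*$ contains the element $1$. Suppose, for contradiction, there exists $A\in\mathcal{F}^*$ with $1\notin A$. Partition $A$ into $d-1$ blocks $P_1,\ldots,P_{d-1}$ of sizes differing by at most one, so that each $|P_i|\leq \lceil k/(d-1)\rceil$. The hypothesis $n\geq dk/(d-1)$ rearranges to $n-k\geq k/(d-1)\geq|P_i|$, guaranteeing enough elements in $[n]\setminus(A\cup\{1\})$ to replace each $P_i$. For each $i$ I would construct $B_i\in\mathcal{F}^*$ with $1\in B_i$ and $B_i\cap P_i=\emptyset$, by starting from $A$ and performing a sequence of down-shifts that replace the elements of $P_i$ (together with one extra element of $A$) by $1$ and small elements of $[n]\setminus(A\cup\{1\})$; shiftedness keeps the intermediate sets inside $\mathcal{F}^*$. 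The $d$-wise intersection condition then yields the contradiction
\[
   A\cap B_1\cap\cdots\cap B_{d-1}\;\subseteq\; A\setminus(P_1\cup\cdots\cup P_{d-1})\;=\;\emptyset.
\]
The main obstacle is carrying out this construction cleanly: one must verify that every intermediate set produced by the swaps lies in $\mathcal{F}^*$ and that the counting under $n\geq dk/(d-1)$ supplies enough distinct replacement elements across all $d-1$ blocks simultaneously.

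Once $1$ lies in every set of $\mathcal{F}^*$, the bound $|\mathcal{F}|=|\mathcal{F}^*|\leq \binom{n-1}{k-1}$ is immediate; equality forces $\mathcal{F}^*$ to be the full star centred at $1$, and reversing the shifts then shows that $\mathcal{F}$ itself is a star.
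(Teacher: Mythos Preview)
The paper does not prove Theorem~\ref{thm:MR0398842}; it is quoted as Frankl's result and then used as a black box (most visibly as the base case in the proof of Theorem~\ref{thm:Main}). Judging your sketch on its own merits, there is a genuine gap.

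Your central structural claim---that a left-shifted $d$-wise intersecting family $\mathcal{F}^{*}\subseteq\binom{[n]}{k}$ with $n\ge dk/(d-1)$ is already a star---is false. Take $d=3$, $k=4$, any $n\ge 6$, and $\mathcal{F}^{*}=\binom{[5]}{4}\subseteq\binom{[n]}{4}$. This family is shifted; it is $3$-wise intersecting (each member is $[5]$ minus one point, so any three of them meet in at least two points); yet $\{2,3,4,5\}\in\mathcal{F}^{*}$ avoids $1$, and no element lies in all five sets. The error is exactly in your construction of the $B_i$. You propose to reach $B_i$ from $A$ by ``down-shifts that replace the elements of $P_i$ \ldots\ by $1$ and small elements of $[n]\setminus(A\cup\{1\})$'', but a down-shift can only replace an element of $A$ by a \emph{smaller} element outside $A$. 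For $A=\{2,3,4,5\}$ the only such element is $1$; the sets reachable from $A$ by iterated shifts are precisely the five members of $\binom{[5]}{4}$, each of which meets $A$ in three points, so no choice of $B_1,B_2$ can make $A\cap B_1\cap B_2=\emptyset$. Your inequality $n-k\ge k/(d-1)$ correctly counts how many elements lie outside $A$, but says nothing about whether they lie \emph{below} the elements of $P_i$ you want to remove---and that ordering information is exactly what shiftedness needs.

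Frankl's argument does not pass through ``shifted $\Rightarrow$ star''; the star structure emerges only in the equality analysis, while the size bound for shifted families is obtained by a different mechanism (an induction on $n$ in which one shows the link $\mathcal{F}(n)$ is $(d-1)$-wise intersecting, in the same spirit as the paper's proof of Theorem~\ref{thm:Main}).
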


According to Frankl and F\"{u}redi~\cite{MR729787},
Katona sought to extend the $d=3$ case of Frankl's result,
by relaxing the intersection condition~(\ref{equ:MR0398842})
to be required only when any $d=3$ distinct sets of~$\mathcal{F}$
have union containing at most $s$ elements for some $s\leq 3k$.
Frankl and F\"{u}redi~\cite{MR729787} proved that
this was possible when $2k\leq s\leq 3k$ and $n$ is sufficiently large.
Mubayi~\cite{MR2209708} completed this work by proving that
the $d=3$ case of Theorem~\ref{thm:MR0398842} could indeed be extended whenever $n\geq 3k/2$.
This led Mubayi~\cite{MR2209708} to conjecture a further
extension to all values of~$d$, as follows:

\begin{conjecture}\label{Conj2}{\rm \cite{MR2209708}}
If $\mathcal{F}\subseteq \binom{[n]}{k}$ is a family of $k$-subsets of $[n]$
where $n\geq\frac{dk}{d-1}$ and $k\geq d\geq 3$ so that,
for all distinct sets $A_1,\ldots,A_d \in\mathcal{F}$,
\begin{equation}
\label{equ:2k}
   A_1\cap\cdots\cap A_d \neq \emptyset
   \quad\textrm{whenever}\quad
  |A_1\cup \cdots\cup A_d| \leq 2k\,,
\end{equation}
then
\[
  |\mathcal{F} | \leq \binom{n-1}{k-1} \,.
\]
Furthermore, equality holds if and only if $\mathcal{F}$ is a star.
\end{conjecture}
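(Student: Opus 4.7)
The plan is to attack Mubayi's Conjecture via the classical shifting strategy: reduce to the case that $\mathcal{F}$ is invariant under every shift, and then handle that case directly. Recall that the shift $S_{ij}$ (for $i<j$) replaces each $A\in\mathcal{F}$ containing $j$ but not $i$ by $(A\setminus\{j\})\cup\{i\}$, provided the resulting set is not already in $\mathcal{F}$; this preserves $|\mathcal{F}|$ and, iterated over all pairs $i<j$, terminates at a shift-invariant family $\mathcal{F}^*$ with $|\mathcal{F}^*|=|\mathcal{F}|$. If one can show that $\mathcal{F}^*$ still satisfies the conditional intersection condition~(\ref{equ:2k}), then the bound $|\mathcal{F}|\leq\binom{n-1}{k-1}$, together with uniqueness of the extremal star, will follow from the shift-invariant case, which is precisely what the abstract advertises as proven here.

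For the shift-invariant sub-problem I would induct on $d$ (with the base $d=3$ case of Mubayi serving as the anchor) and exploit the structural rigidity that shift-invariance provides: any $A\in\mathcal{F}^*$ with $j\in A$ and $i\notin A$ for $i<j$ forces $(A\setminus\{j\})\cup\{i\}\in\mathcal{F}^*$ as well. The goal is to show that either every member of $\mathcal{F}^*$ contains the element~$1$, in which case $\mathcal{F}^*$ is a subfamily of the star at~$1$ and hence of size at most $\binom{n-1}{k-1}$, or there exist $d$ distinct sets $A_1,\ldots,A_d\in\mathcal{F}^*$ with $|A_1\cup\cdots\cup A_d|\leq 2k$ and $A_1\cap\cdots\cap A_d=\emptyset$, contradicting~(\ref{equ:2k}). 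To construct such a witness when some $A\in\mathcal{F}^*$ misses $1$, I would use shift-invariance to generate controlled ``translates'' of $A$ whose supports all sit inside a short initial segment of $[n]$, making their union small while arranging for some element to be avoided by at least one translate; the hypothesis $n\geq dk/(d-1)$ is precisely what makes this counting feasible. For the equality case, once $\mathcal{F}^*$ is contained in the star at~$1$, saturation forces $\mathcal{F}^*$ to equal the whole star, and then the bijective $(i,j)$-unstable/$(j,i)$-unstable duality announced in the abstract would be invoked to track the pre-image of $\mathcal{F}^*$ under the shifts and conclude that the original $\mathcal{F}$ must also be a star.

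The main obstacle, and the reason the conjecture is not an immediate corollary of Frankl's argument for Theorem~\ref{thm:MR0398842}, is that shifting need not preserve the union-bounded intersection condition~(\ref{equ:2k}): after performing $S_{ij}$ a $d$-tuple in $S_{ij}(\mathcal{F})$ can have union of size at most $2k$ even though its pre-image in $\mathcal{F}$ did not, so the hypothesis on $\mathcal{F}$ does not transfer automatically. The key technical step is therefore to show that every such newly created bad $d$-tuple in $S_{ij}(\mathcal{F})$ can be swapped, coordinate by coordinate, back to a bad $d$-tuple in $\mathcal{F}$ itself. This is exactly the role I would assign to the $(i,j)$/$(j,i)$-unstable bijective duality: it should let one trade each offending configuration in the shifted family for an equally offending one in the original, forcing a contradiction there. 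Making this swap operate uniformly across all shifts and all $d$-tuples is the crux of the argument; once it is in hand, the reduction to shift-invariance and the structural analysis above go through in the standard EKR-style manner.
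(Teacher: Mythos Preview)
The statement you are attempting to prove is labelled \emph{Conjecture} in the paper for good reason: the paper does not prove it. What the paper establishes is only the special case in which $\mathcal{F}$ is already stable under all shifts (Theorem~\ref{thm:Main}), and it explicitly records as Corollary~\ref{cor:conjecture_reduction} that the remaining, unresolved work is precisely the unstable case. Your proposal is not a variant of the paper's proof; it is an outline for settling an open problem, and the outline has a genuine gap at its most delicate point.

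The gap is exactly where you locate the ``main obstacle'': shifting does \emph{not} preserve condition~(\ref{equ:2k}). The paper states this outright (just before Proposition~\ref{prop:Shift invariant unions}) and then analyses the failure mode via the notion of $(i,j)$-unstable subfamilies. You propose to use the $(i,j)$/$(j,i)$-unstable duality of Theorem~\ref{thm:UW symmetry} to ``trade each offending configuration in the shifted family for an equally offending one in the original''. But that is not what the theorem gives. Theorem~\ref{thm:UW symmetry} takes a $d$-subfamily $\mathcal{A}\subseteq\mathcal{F}$ that is $(d,s,t)$-conditionally intersecting while $S_{ij}(\mathcal{A})$ is not, and produces a $d$-subfamily $\mathcal{B}\subseteq\mathcal{F}$ that is $(d,s,t)$-conditionally intersecting while $S_{ji}(\mathcal{B})$ is not. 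Both $\mathcal{A}$ and $\mathcal{B}$ are \emph{good} inside $\mathcal{F}$; the bad configurations live in $S_{ij}(\mathcal{F})$ and $S_{ji}(\mathcal{F})$, not in $\mathcal{F}$. So the duality never hands you a violation of~(\ref{equ:2k}) in the original family, and the reduction to the stable case does not go through. Absent a new idea for this step, your plan proves only what the paper already proves, namely Theorem~\ref{thm:Main}.

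A secondary remark: even for the stable case, your sketched argument (force $1\in A$ for all $A$, else build a bad $d$-tuple inside a short initial segment) is not the route the paper takes. The paper's proof of Theorem~\ref{thm:Main} is an induction on $n$ via the decomposition $\mathcal{F}=\mathcal{F}(\bar n)\cup\{A\cup\{n\}:A\in\mathcal{F}(n)\}$, showing that $\mathcal{F}(\bar n)$ inherits the hypothesis for the same $d$ and $\mathcal{F}(n)$ inherits it for $d-1$; the base case at $d=3$ reduces to the Erd\H{o}s--Ko--Rado Theorem, not to Mubayi's $d=3$ result. This is why the paper in fact proves the stronger bound with $2k$ replaced by $2k-(d-2)$ in~(\ref{equ:2k-(d-3)}).
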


As described above,
Mubayi~\cite{MR2209708} proved the conjecture for $d=3$,
and Mubayi~\cite{MR2355601} proved the conjecture for $d = 4$
when $n$ is sufficiently large.
F\"{u}redi and \"{O}zkahya~\cite{MR2591419}
and Mubayi and Ramadurai~\cite{MR2507945}
independently improved this result by proving
the conjecture for sufficiently large $n$,
thus generalising the above-mentioned result by Frankl and F\"{u}redi~\cite{MR729787}.
Chen et al.~\cite{MR2538649} proved Mubayi's Conjecture for $d=k$
and F\"{u}redi and \"{O}zkahya~\cite{MR2591419} proved that
Mubayi's Conjecture even holds when $d=k+1$.
However, Mubayi~\cite{MR2355601} provided a counterexample
that showed that the conjecture could not be extended to values of $d$ greater than or equal to $2^k$.
During the publication of this article,
Lifshitz~\cite{Lifshitz} also proved that
Mubayi's Conjecture holds
when $0< \zeta n \leq k \leq \frac{d-1}{d}n$ and $n$ is sufficiently larger than $\zeta$ and $d$,
and that, under these conditions,
the upper bound in Condition~(\ref{equ:2k})
can indeed be relaxed to $(\frac{d}{d-1}+\zeta)k$.

The first main result of the present paper,
Theorem~\ref{thm:Main} in Section~\ref{sec:main},
is a new partial verification of Mubayi's Conjecture.
Namely,
we prove that Mubayi's Conjecture holds for stable families~$\mathcal{F}\subseteq\binom{[n]}{k}$ of $k$-sets;
these are the families that are invariant with respect to the shifting operation.
Indeed, it turns out that Mubayi's Conjecture holds for such families
even when the upper bound in Condition~(\ref{equ:2k}) of the conjecture is relaxed.

Thus, to prove or disprove Mubayi's Conjecture,
it is sufficient to consider the conjecture with respect to families
that are not stable under shifting.
In Section~\ref{sec:dst-conditional}, we
prove general properties and characterisations of families that are unstable
and, more subtly, that are {\em $(i,j)$-unstable}.
The main result of that section is Theorem~\ref{thm:UW symmetry}
which describes how each $(i,j)$-unstable family
is related by an explicit and fundamental involution to a unique $(j,i)$-unstable family.

These notions of stability refer to the central concept of this paper,
namely the {\em $(d,s,t)$-conditionally intersecting} condition for families of sets,
introduced in Section~\ref{sec:dst-conditional}.
For $t=1$, this becomes the {\em $(d,s)$-conditionally intersecting} condition
which, in turn for $s = 2k$, is the intersecting condition~(\ref{equ:2k}).
More generally,
the $(d,s,t)$-conditionally intersecting condition
naturally generalises many previous intersecting conditions in
the literature
\cite{MR1429238,MR0140419,MR0398842,MR729787,MR905277,MR2507945};
those are here given a useful common framework.

In Section~\ref{sec:nonintersectingintersecting},
we pose a conjecture that sharpens Mubayi's Conjecture
by considering the difference between the two conditions in
Frankl's Theorem and Mubayi's Conjecture.
In particular, we conjecture on the size and extremal structures
of families $\mathcal{F}\in\binom{[n]}{k}$ that are $(d,2k)$-conditionally intersecting
but which are {\em not} intersecting;
see Conjecture~\ref{conj:nonintersecting}.
We show by example that this conjecture cannot be extended to small values of $n$
and we also show that there is circumstantial reason to believe that
Conjecture~\ref{conj:nonintersecting} might be true:
Proposition~\ref{prop:twinstar} shows that a similar claim is true
for $(3,4)$-conditionally intersecting families.

The families $\mathcal{F}\in\binom{[n]}{k}$ that are $(d,s)$-conditionally intersecting
for some $s<~2k$ include several intersection families in the literature~\cite{MR1405994,MR1429238,MR0140419,MR729787}.
In Section~\ref{sec:families},
we therefore define two $(d,s)$-conditionally intersecting
families that generalise these previous families.
We prove a fundamental theorem (Theorem~\ref{Thm Counter})
on these new families and thus also on previous families,
and we pose Conjecture~\ref{Conj Hr} which extends a previous
conjecture by Frankl and F\"uredi~\cite{MR729787}
on $(3,2k-1)$-conditionally intersecting families.

The final Sections~\ref{sec:2si} and~\ref{sec:2sii} are motivated by
the classical result by Erd\H{o}s et al.~\cite{MR0140419}
that an intersecting family $\mathcal{F}\subseteq 2^{[n]}$ can have size at most $2^{n-1}$
and that this bound is met by star families.
In Section~\ref{sec:2si}, we generalise this theorem by
proving tight upper bounds on the size of
$(2,s)$-conditionally intersecting families $\mathcal{F}\subseteq 2^{[n]}$
and by characterising the families that attain these bounds;
see Theorem~\ref{thm:mine}.
We extend these results further in Section~\ref{sec:2sii},
for certain parameters as well as for sufficiently large families
with respect to $(2,s)$-conditionally intersecting families $\mathcal{F}\subseteq 2^{[n]}$
whose members have at most a fixed number~$u$ members;
see Theorems~\ref{thm:mine2} and~\ref{thm:Family-size-comp}.
Finally, we invite the reader to ponder the question of whether
these final results can be merged into a complete description of
$(2,s)$-conditionally intersecting families $\mathcal{F}\subseteq 2^{[n]}$
and their extremal sizes and structures.

\section{Mubayi's Conjecture for stable families}
\label{sec:main}
$ $\\
\noindent
Initially used to prove the Erd\H{o}s-Ko-Rado~Theorem~\cite{MR0140419}
and surveyed in~\cite{MR905277,MR1105464},
the {\em $(i,j)$-shift}~$S_{ij}$
on each family $\mathcal{F} \subseteq 2^{[n]}$ is
the function defined by
\[
  S_{ij}(\mathcal{F}) := \{S_{ij}(A) \,:\,  A\in\mathcal{F}\}\,,
\]
where
\[
     S_{ij}(A)
  := \begin{cases}
       A' = (A - \{j\})\cup \{i\}  & \textrm{if } j\in A,\; i\notin A,\; A'\notin \mathcal{F}\,;\\
	   A                                 & \textrm{otherwise}.
     \end{cases}
\]
A few properties of the shift $S_{ij}$ are given below.
\begin{proposition}\label{prop:shift_prop}
If $\mathcal{F}\subseteq 2^{[n]}$ and $i,j\in[n]$,
then
\begin{itemize}
\item[(i)]   $|S_{ij}(A)|  =  |A|$\quad for all $A\in\mathcal{F}$;
\item[(ii)]  $|S_{ij}(\mathcal{F})| = |\mathcal{F}|$;
\item[(iii)]{for any $\mathcal{G} \subseteq \mathcal{F}$, $\begin{aligned}[t]
\displaystyle\Bigl|\bigcup_{A\in\mathcal{G}} A\Bigr|-1
                    &\leq \Bigl|\bigcup_{A\in\mathcal{G}} S_{ij}(A)\Bigr|
                    \leq \Bigl|\bigcup_{A\in\mathcal{G}} A\Bigr|+1
 \; \text{ and } \;\\ \displaystyle\Bigl|\bigcap_{A\in\mathcal{G}} A\Bigr|
                    &\leq \Bigl|\bigcap_{A\in\mathcal{G}} S_{ij}(A)\Bigr|
                    \leq \Bigl|\bigcap_{A\in\mathcal{G}} A\Bigr|+1.
\end{aligned}$}
\end{itemize}
\end{proposition}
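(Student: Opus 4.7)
My plan for (i) is immediate from the two-case definition of $S_{ij}$: either $S_{ij}(A)=A$, or $S_{ij}(A)=(A\setminus\{j\})\cup\{i\}$ with $j\in A$ and $i\notin A$, so exactly one element is removed and one element is added, preserving $|A|$ in either case. For (ii), I would show that the restriction of $S_{ij}$ to $\mathcal{F}$ is injective, from which $|S_{ij}(\mathcal{F})|=|\mathcal{F}|$ follows. Suppose $A,B\in\mathcal{F}$ satisfy $A\neq B$ and $S_{ij}(A)=S_{ij}(B)$, and split into cases according to whether each of $A,B$ is shifted. If both are shifted, then $(A\setminus\{j\})\cup\{i\}=(B\setminus\{j\})\cup\{i\}$ together with $j\in A\cap B$ forces $A=B$; if neither is shifted, then $A=S_{ij}(A)=S_{ij}(B)=B$ directly; if exactly one, say $A$, is shifted, then $S_{ij}(A)=(A\setminus\{j\})\cup\{i\}\notin\mathcal{F}$ by the defining condition of the shift, whereas $S_{ij}(B)=B\in\mathcal{F}$, a contradiction.

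The substance is (iii), which I would prove by an elementwise analysis. The central observation is that $S_{ij}$ can only modify membership of $i$ and $j$: for every $x\in[n]\setminus\{i,j\}$ and every $A\in\mathcal{G}$ one has $x\in A\iff x\in S_{ij}(A)$. Hence the symmetric differences between $\bigcup_{A\in\mathcal{G}} A$ and $\bigcup_{A\in\mathcal{G}} S_{ij}(A)$, and likewise between the corresponding intersections, are contained in $\{i,j\}$, giving an a priori bound of two on the change in cardinality. The remaining task is to sharpen this to the bounds claimed by tracking the membership of $i$ and $j$ individually.

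For the union, the plan is to observe that $i$ enters $\bigcup_{A\in\mathcal{G}} S_{ij}(A)$ exactly when at least one $A\in\mathcal{G}$ is actually shifted, while $j$ leaves $\bigcup_{A\in\mathcal{G}} S_{ij}(A)$ only when every $A\in\mathcal{G}$ with $j\in A$ is actually shifted. The latter event forces the former, so losing $j$ forces $i$ to be added (if not already present), and the union therefore changes by at most one in either direction. For the intersection, the analogous observation is that $i$ enters $\bigcap_{A\in\mathcal{G}} S_{ij}(A)$ only when every $A\in\mathcal{G}$ with $i\notin A$ is actually shifted, whereas $j$ leaves $\bigcap_{A\in\mathcal{G}} S_{ij}(A)$ as soon as a single $A\in\mathcal{G}$ with $j\in A$ is actually shifted. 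I expect the main obstacle to be precisely this asymmetry, namely ruling out the case in which $j$ is lost from the intersection without a compensating gain of $i$. The resolution should rely on the defining condition $A'\notin\mathcal{F}$ governing each shift: a careful case analysis of those $A\in\mathcal{G}$ with $j\in A$, $i\notin A$, separating the shifted ones from the unshifted ones (whose witnesses $A'$ lie in $\mathcal{F}$), should show that the obstructing configurations are forbidden and thus yield the stated one-sided bound.
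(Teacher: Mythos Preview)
Your arguments for (i), (ii), and the union estimates in (iii) are correct and essentially match the paper, which dismisses (i)--(ii) as following from definitions and for (iii) merely observes that a shift can change unions and intersections by at most one before turning to the intersection lower bound.

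The genuine gap is in the intersection lower bound. Your hope that the ``obstructing configurations are forbidden'' by the condition $A'\notin\mathcal{F}$ is misplaced: for general $\mathcal{G}\subsetneq\mathcal{F}$ the inequality $\bigl|\bigcap_{A\in\mathcal{G}}A\bigr|\le\bigl|\bigcap_{A\in\mathcal{G}}S_{ij}(A)\bigr|$ is \emph{false}. Take $i=1$, $j=2$, $\mathcal{F}=\bigl\{\{2,3\},\{2,4\},\{1,4\}\bigr\}$ and $\mathcal{G}=\bigl\{\{2,3\},\{2,4\}\bigr\}$. Then $S_{12}(\{2,3\})=\{1,3\}$ since $\{1,3\}\notin\mathcal{F}$, while $S_{12}(\{2,4\})=\{2,4\}$ since $\{1,4\}\in\mathcal{F}$; hence $\bigcap_{A\in\mathcal{G}}A=\{2\}$ but $\bigcap_{A\in\mathcal{G}}S_{12}(A)=\{1,3\}\cap\{2,4\}=\emptyset$, and the size has dropped from $1$ to $0$. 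The configuration you wanted to rule out --- an unshifted $A_1\in\mathcal{G}$ with $j\in A_1$, $i\notin A_1$ and witness $A_1'\in\mathcal{F}\setminus\mathcal{G}$, alongside a shifted $A_0\in\mathcal{G}$ --- is exactly what occurs here, and nothing in the hypotheses excludes it. The paper's own proof has the same gap: from ``$j$ removed and $i$ not added'' it infers that $i$ must already lie in $\bigcap_{A\in\mathcal{G}}A$, overlooking the possibility that $i$ belongs to neither intersection.

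What your case analysis \emph{does} establish is the special case $\mathcal{G}=\mathcal{F}$: if $j\in\bigcap_{A\in\mathcal{F}}A$, then no $A'=(A\setminus\{j\})\cup\{i\}$ can lie in $\mathcal{F}$ (since $j\notin A'$), so every $A$ with $i\notin A$ is genuinely shifted and $i$ enters the new intersection. That version suffices for the paper's actual applications (notably that shifting preserves $d$-wise $t$-intersection of the whole family), but it does not give the proposition as written.
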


\begin{proof}
(i) and (ii) follow from definitions (see also \cite{MR1105464}).
To prove (iii), note that the shift operation cannot increase or decrease
the size of a union or intersection of sets $A\in\mathcal{G}$ by more than one.
Suppose that $\bigl|\bigcap_{A\in\mathcal{G}} S_{ij}(A)\bigr|
                       = \bigl|\bigcap_{A\in\mathcal{G}} A\bigr| - 1$.
Then the shift $S_{ij}$ has removed $j$ from the intersection and yet did not add $i$,
so $i$ must already be contained in $\bigcap_{A\in\mathcal{G}} A$ and thus in each member $A\in\mathcal{G}$,
a contradiction.
\end{proof}

\noindent
If $S_{ij}(\mathcal{F}) = \mathcal{F}$ whenever $1\leq i < j\leq n$,
then $\mathcal{F}$ is {\em stable} or {\em shifted}.
When $i<j$, the set $S_{ij}(A)$ either equals $A$
or replaces an element in~$A$ by an element of smaller value,
so each member $A$ of a stable family $\mathcal{F}$ must satisfy $S_{ij}(A) = A$.
Applying sufficiently many shifts $S_{i_\ell j_\ell}$ with $i_\ell<j_\ell$ to $\mathcal{F}$
will yield a (non-unique) stable family~$\mathcal{F}'$; see~\cite{MR1105464}.

\begin{example}
\label{exa:shiftinvariant}{\rm
The family $\mathcal{F}:=\bigl\{\{1,2\},\{1,3\}\bigr\}\subseteq 2^{[3]}$ is stable.
More generally, each star $\mathcal{F}\subseteq\binom{[n]}{k}$ whose members each contain the element~1 are stable.}
\end{example}

\noindent
The main result of the paper is Theorem~\ref{thm:Main} below
which states that Mubayi's Conjecture holds for stable families $\mathcal{F}$ of $k$-sets
even when Condition~(\ref{equ:2k}) of the conjecture is relaxed.

\begin{theorem}
\label{thm:Main}
If $\mathcal{F}\subseteq \binom{[n]}{k}$ is a stable family of $k$-subsets of $[n]$
where $n\geq\frac{dk}{d-1}$ and $k\geq d\geq 3$
so that, for all distinct sets $A_1,\ldots,A_d \in\mathcal{F}$,
\begin{equation}
\label{equ:2k-(d-3)}
   A_1\cap\cdots\cap A_d \neq \emptyset
   \;\;\textrm{whenever}\;\;
  |A_1\cup \cdots\cup A_d| \leq 2k-(d-2)\;,
\end{equation}
then
\[
  |\mathcal{F} | \leq \binom{n-1}{k-1} \,.
\]
Furthermore, equality holds if and only if $\mathcal{F}$ is a star.
\end{theorem}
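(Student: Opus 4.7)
Partition $\mathcal{F}=\mathcal{F}_1\sqcup\mathcal{F}_{\bar 1}$ according to whether a set contains the element $1$. Since $|\mathcal{F}_1|\leq\binom{n-1}{k-1}$ trivially, it suffices to show $|\mathcal{F}_{\bar 1}|\leq\binom{n-1}{k-1}-|\mathcal{F}_1|$, with equality only when $\mathcal{F}_{\bar 1}=\emptyset$ and $\mathcal{F}_1$ is the full star at $1$. I would prove this by exhibiting an injection from $\mathcal{F}_{\bar 1}$ into the complement of $\mathcal{F}_1$ inside the star at $1$.

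The engine of the argument is a \emph{forbidden-intersection lemma}: for every $B\in\mathcal{F}_{\bar 1}$ and every $A\in\mathcal{F}_1$, $|A\cap B|\neq d-2$. Writing $B=\{b_1<\cdots<b_k\}$, stability places each shifted set $B^{(i)}:=(B\setminus\{b_i\})\cup\{1\}$ into $\mathcal{F}_1$. If $A\cap B=\{b_{i_1},\ldots,b_{i_{d-2}}\}$, then the $d$-tuple $A,B,B^{(i_1)},\ldots,B^{(i_{d-2})}$ consists of $d$ distinct members of $\mathcal{F}$ (distinctness uses $k\geq d$, so $|A\cap B|=d-2<k-1=|B^{(i)}\cap B|$ ensures $A\neq B^{(i_j)}$), has union equal to $A\cup B$ of size exactly $2k-(d-2)$, and has empty intersection, directly contradicting~(\ref{equ:2k-(d-3)}).

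Given this lemma, the natural injection sends each $B\in\mathcal{F}_{\bar 1}$ to $\phi(B)=\{1,b_1,\ldots,b_{d-2}\}\cup T_B$, where $T_B$ is a canonical $(k-d+1)$-subset of $[n]\setminus(B\cup\{1\})$; the hypothesis $n\geq dk/(d-1)$ guarantees $|[n]\setminus(B\cup\{1\})|\geq k-d+1$, so $T_B$ exists, and the forbidden-intersection lemma forces $\phi(B)\notin\mathcal{F}_1$. The main obstacle will be proving injectivity of $\phi$: two members of $\mathcal{F}_{\bar 1}$ sharing their first $d-2$ elements could collide under a careless choice of $T_B$, so $T_B$ must be designed to encode the ``high'' elements $b_{d-1},\ldots,b_k$ of $B$ in a recoverable way, exploiting the fact that $\mathcal{F}_{\bar 1}$ is itself stable within $\binom{\{2,\ldots,n\}}{k}$. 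Once injectivity is established, $|\mathcal{F}|\leq\binom{n-1}{k-1}$ follows immediately, and the equality case is characterized by both inclusions becoming tight, which forces $\mathcal{F}_{\bar 1}=\emptyset$ and $\mathcal{F}_1$ to equal the full star at $1$, i.e., $\mathcal{F}$ is a star.
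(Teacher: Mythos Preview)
Your forbidden-intersection lemma is correct and elegant: for $B\in\mathcal{F}_{\bar 1}$ and $A\in\mathcal{F}_1$ with $A\cap B=\{b_{i_1},\dots,b_{i_{d-2}}\}$, the $d$ sets $A,B,B^{(i_1)},\dots,B^{(i_{d-2})}$ do violate~(\ref{equ:2k-(d-3)}). But the plan has two real gaps. First, the existence claim for $T_B$ is false: you need $n-k-1\geq k-d+1$, i.e.\ $n\geq 2k-(d-2)$, and for $k\geq d\geq 3$ one checks that $\tfrac{dk}{d-1}<2k-(d-2)$, so the hypothesis $n\geq\tfrac{dk}{d-1}$ does \emph{not} suffice. (The range $n\leq 2k-(d-2)$ can be handled separately by Frankl's theorem, since there~(\ref{equ:2k-(d-3)}) reduces to $d$-wise intersecting, but you do not say this.) Second, and more seriously, injectivity of $\phi$ is not proved; you explicitly defer it. You are trying to recover the $k-d+2$ ``high'' elements $b_{d-1},\dots,b_k$ from a set $T_B$ of only $k-d+1$ elements, one short, so without a concrete construction exploiting the structure of $\mathcal{F}_{\bar 1}$ this cannot work, and your appeal to stability is a gesture, not an argument. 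Since the entire bound rests on this injection, the plan as written establishes nothing beyond the lemma. Your equality analysis is also wrong: equality $|\mathcal{F}_{\bar 1}|=\binom{n-1}{k-1}-|\mathcal{F}_1|$ would only say that $\phi$ is a bijection, not that $\mathcal{F}_{\bar 1}=\emptyset$.

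For contrast, the paper's proof avoids any injection. It inducts on $n$, splitting $\mathcal{F}$ according to membership of the \emph{largest} element into $\mathcal{F}(\bar n)\subseteq\binom{[n-1]}{k}$ and the link $\mathcal{F}(n)\subseteq\binom{[n-1]}{k-1}$. The key step is that stability forces $\mathcal{F}(n)$ to satisfy~(\ref{equ:2k-(d-3)}) with parameters $(d-1,k-1)$ in place of $(d,k)$; one then applies the induction hypothesis to each piece (and Erd\H{o}s--Ko--Rado when $d-1=2$), giving $|\mathcal{F}|\leq\binom{n-2}{k-1}+\binom{n-2}{k-2}=\binom{n-1}{k-1}$.
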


\begin{proof}
Suppose that $\mathcal{F}$ is a family as given in the theorem.\\
If $n \leq 2k-(d-2)$,
then $|A_1\cup \cdots\cup A_d| \leq 2k-(d-2)$ is always true,
so Condition~(\ref{equ:2k-(d-3)}) simply requires
that any $d$ members of $\mathcal{F}$ intersect,
and the theorem follows from Theorem~\ref{thm:MR0398842}.

Suppose now that $n > 2k-(d-2)$ and
assume inductively that the theorem is true
for all integers $m = 2k-(d-2),\ldots,n-1$ and $d$ and $k$
for which $k \geq d\geq 3$ and $m\geq \frac{dk}{d-1}$.
Define
\[
  \mathcal{F}(\bar{n}) := \{ A               \::\: n\notin A\in\mathcal{F} \}
  \quad\textrm{and}\quad
  \mathcal{F}(n)       := \{ A\setminus\{n\} \::\: n   \in A\in\mathcal{F} \}\,.
\]
Since $\mathcal{F}(\bar{n})\subseteq \mathcal{F}$
and since $\mathcal{F}$ satisfies Condition~(\ref{equ:2k-(d-3)}),
the family $\mathcal{F}(\bar{n})$ must satisfy Condition~(\ref{equ:2k-(d-3)}). The family $\mathcal{F}(\bar{n})$ is clearly invariant under
any shift of the form $S_{in}$.
As $\mathcal{F}$ is stable, it follows that
$A' = (A - \{j\})\cup \{i\} \in \mathcal{F}$
for any $i<j<n$ and $A \in \mathcal{F}(\bar{n})$
such that $j \in A$ and $i \notin A$.
Furthermore, $A' \in \mathcal{F}(\bar{n})$ since $n \notin A'$.
Thus, $\mathcal{F}(\bar{n})$ is stable and, by assumption,
$|\mathcal{F}(\bar{n})|\leq \binom{n-2}{k-1}$.

Now consider $\mathcal{F}(n)$.
Let $S_{ij}$ be a shift with $i<j<n$, let $A\in \mathcal{F}(n)$, and set $B = A \cup \{n\}$.
Since $\mathcal{F}$ is stable, it follows that $S_{ij}(A) = S_{ij}(B)-\{n\} = B-\{n\} = A$,
so $\mathcal{F}(n)$ is stable.
Assume that $\mathcal{F}(n)$ does not satisfy Condition~(\ref{equ:2k-(d-3)}) for the value $d-1$.
That is,
assume that $A_1\cap\cdots\cap A_{d-1}=\emptyset$
for some members $A_1,\ldots,A_{d-1}\in\mathcal{F}(n)$ with
\begin{equation}
\label{equ:k-1,d-1}
  |A_1\cup\cdots\cup A_{d-1}|\leq 2(k-1)-((d-1)-2)\,.
\end{equation}
Set $F_j := A_j \cup \{n\}$ for each $j = 1,\ldots,d-1$.
By definition, $F_1 \cap\cdots\cap F_{d-1} = \{n\}$
and $F_j\in \mathcal{F}$.
Choose $i \in A_1 \cup\cdots\cup A_{d-1}$ and $A_\ell$ such that $i \notin A_\ell$,
and define the set $F'_\ell := (F_\ell - \{n\}) \cup \{i\}$.
Since $\mathcal{F}$ is stable and $i\notin F_\ell$ and $n\in F_\ell$,
it follows that $F'_\ell\in\mathcal{F}$.
Then by (\ref{equ:k-1,d-1}),
\[
       |F_1 \cup F_2 \cup\cdots\cup F_{d-1} \cup F'_\ell|
  \leq 2(k-1)-((d-1)-2)+1
   =   2k-(d-2)
\]
but
\[
  F_1 \cap\cdots\cap F_{d-1} \cap F'_\ell = \emptyset\,,
\]
a contradiction.
Hence, $\mathcal{F}(n)$ is stable and satisfies Condition~(\ref{equ:2k-(d-3)}) for the value $d-1$.
If $d > 3$, then $k-1 \geq d-1 \geq 3$ and,
as initially supposed,
\[
        n-1
   \geq 2k-(d-2)
    =  \frac{d-1}{d-2}(k-1) + \frac{(d-3)(k-d+2) + 1}{d-2}
    > \frac{d-1}{d-2}(k-1)
\]
so by the induction assumption, $|\mathcal{F}(n)|\leq \binom{n-2}{k-2}$.
If $d=3$, then $n-1 \geq 2k-(d-2) = 2k-1 > 2(k-1)$,
and since Condition~(\ref{equ:2k-(d-3)}) for $\mathcal{F}(n)$ reduces to
the condition of the Erd\H{o}s-Ko-Rado~Theorem for $d-1$,
it follows from that theorem that $|\mathcal{F}(n)|\leq \binom{n-2}{k-2}$.
Hence,
\[
  |\mathcal{F}|  =   |\mathcal{F}(\bar{n})| + |\mathcal{F}(n)|
       \leq \binom{n-2}{k-1} + \binom{n-2}{k-2}
        =   \binom{n-1}{k-1}\,.
\]
If $\mathcal{F}$ is a star, then equality is trivially attained above.
Conversely, suppose that $|\mathcal{F}| = \binom{n-1}{k-1}$.
Then, by the inequality above,
$|\mathcal{F}(\bar{n})| = \binom{n-2}{k-1}$ and $|\mathcal{F}(n)| = \binom{n-2}{k-2}$.
By the induction assumption,
$\mathcal{F}(\bar{n})$ forms a star
and if $d > 3$, then $\mathcal{F}(n)$ also forms a star.
By the Erd\H{o}s-Ko-Rado~Theorem,
$\mathcal{F}(n)$ also forms a star when $d = 3$.
Let $j$ denote the common element of the members of $\mathcal{F}(n)$,
and assume that $j\neq 1$.
Since $n-1 \geq k$, there exists $A \in \mathcal{F}(n)$ such that $1 \notin A$.
Thus, $S_{1j}(A) = (A-\{j\})\cup \{1\} \in \mathcal{F}(n)$ as $\mathcal{F}(n)$ is stable.
But $j\notin S_{1j}(A)$, a contradiction, so $j = 1$.
Similarly, the common element of the members of $\mathcal{F}(\bar{n})$ must be~1.
Hence, $\mathcal{F}$ is the star $\bigl\{A\,:\, 1\in A\in\binom{[n]}{k}\bigr\}$.
Induction concludes the proof.
\end{proof}

\section{A closer look at intersecting conditions}
\label{sec:dst-conditional}
$ $\\
\noindent
Theorem~\ref{thm:Main} verifies Mubayi's Conjecture for stable families
and indeed provides a stronger result for these families.
The following observation is thus worth highlighting.

\begin{corollary}
\label{cor:conjecture_reduction}
To prove or disprove Mubayi's Conjecture,
it is sufficient to consider the conjecture with respect to families that are not stable under shifting.
\end{corollary}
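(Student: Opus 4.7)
The plan for this corollary is very short: Theorem~\ref{thm:Main} has already done essentially all the work, so the proof amounts to checking that its hypothesis is implied by the hypothesis of Mubayi's Conjecture. First, I would compare the two conditions side by side. Mubayi's Conjecture requires $A_1\cap\cdots\cap A_d\neq\emptyset$ whenever $|A_1\cup\cdots\cup A_d|\leq 2k$, whereas Theorem~\ref{thm:Main} requires the same conclusion only under the stricter numerical restriction $|A_1\cup\cdots\cup A_d|\leq 2k-(d-2)$. Since $2k-(d-2)\leq 2k$ for $d\geq 2$, every $d$-tuple of members of $\mathcal{F}$ satisfying the latter inequality also satisfies the former. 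Hence every family $\mathcal{F}\subseteq\binom{[n]}{k}$ obeying the hypothesis of Mubayi's Conjecture automatically obeys Condition~(\ref{equ:2k-(d-3)}) of Theorem~\ref{thm:Main}.

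Next I would invoke Theorem~\ref{thm:Main} directly on the stable case. If a family $\mathcal{F}$ satisfying the hypothesis of Mubayi's Conjecture happens to be stable, then by the observation above it also satisfies the hypothesis of Theorem~\ref{thm:Main}, and so the theorem delivers both the upper bound $|\mathcal{F}|\leq\binom{n-1}{k-1}$ and the characterisation of equality as a star. These are precisely the two conclusions asserted by Mubayi's Conjecture. Thus the conjecture is already established for the class of stable families.

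The corollary now follows by contrapositive: any hypothetical counterexample to Mubayi's Conjecture, being a family that satisfies the hypothesis but violates the bound or the star characterisation, cannot lie in the class handled by Theorem~\ref{thm:Main} and therefore cannot be stable. Equivalently, in any attempt to prove the conjecture it suffices to restrict attention to families that are not stable under shifting. I do not anticipate any obstacle; the whole argument is a two-line logical rearrangement of Theorem~\ref{thm:Main} together with the elementary inequality $2k-(d-2)\leq 2k$.
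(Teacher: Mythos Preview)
Your proposal is correct and matches the paper's treatment: the corollary is stated there as an immediate observation following Theorem~\ref{thm:Main}, with no separate proof given, and your argument spells out exactly the intended reasoning (namely that $2k-(d-2)\le 2k$ makes Condition~(\ref{equ:2k}) imply Condition~(\ref{equ:2k-(d-3)}), so Theorem~\ref{thm:Main} already settles the stable case).
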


\noindent
Given this observation,
it is natural and perhaps important to investigate the general properties of families that are not stable;
that is the purpose of the present section.

A family of subsets $\mathcal{F}\subseteq 2^{[n]}$ is {\em $(d,s,t)$-conditionally intersecting}
if, for all distinct sets $A_1,\ldots,A_d \in\mathcal{F}$,
\begin{equation}
\label{equ:(d,s,t)}
  |A_1 \cap\cdots\cap A_d| \geq t
  \qquad\textrm{whenever}\qquad
  |A_1 \cup\cdots\cup A_d| \leq s  \,.
\end{equation}
If $\mathcal{F}$ is $(d,s,1)$-conditionally intersecting,
then $\mathcal{F}$ is {\em $(d,s)$-conditionally intersecting}.
Thus, the Erd\H{o}s-Ko-Rado~Theorem addresses families $\mathcal{F}$ that are $(2,n)$-conditionally intersecting.
Similarly, families $\mathcal{F}$ satisfying
Conditions~(\ref{equ:MR0398842}), (\ref{equ:2k}) and (\ref{equ:2k-(d-3)})
are, respectively,
$(d, n)$-conditionally intersecting,
$(d,2k)$-conditionally intersecting,
and
$(d,2k-(d-3))$-conditionally intersecting.
A family $\mathcal{F}$ is {\em $d$-wise $t$-intersecting} if it is $(d,n,t)$-conditionally intersecting;
that is, if
\begin{equation*}
  |A_1 \cap\cdots\cap A_d| \geq t
  \qquad\textrm{whenever}\qquad
   A_1,\ldots,A_d \in\mathcal{F}\,.
\end{equation*}
Families of $d$ sets $A_1,\ldots,A_d$ which do not $(d,2k)$-conditionally intersect
have been referred to in the literature as {\em $d$-clusters};
see \cite{MR2538649,MR2591419,MR2676834,MR2507945,MR2501436}.

The following proposition lists some observations regarding
$(d,s,t)$-conditionally intersecting families and shifting.

\begin{proposition}\label{prop:shift_and_intersectingness}
If $\mathcal{F}\subseteq \binom{[n]}{k}$ and $i,j\in[n]$,
then
\begin{itemize}
\item[(i)]   if   $\mathcal{F}$ is $(d,s,t)$-conditionally intersecting,
             then $\mathcal{F}$ is $(d,s-1,t)$-conditionally intersecting;
\item[(ii)]  if   $\mathcal{F}$ is $(d,s,t)$-conditionally intersecting and $s \geq dk$,
             then $\mathcal{F}$ is $d$-wise $t$-intersecting;
\item[(iii)] if   $\mathcal{F}$ is $d$-wise $t$-intersecting, 
             then so is the shift $S_{ij}(\mathcal{F})$.
\end{itemize}
\end{proposition}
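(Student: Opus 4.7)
The plan is to settle the three parts in order, each by a short direct argument; part~(iii) is the only one that draws on nontrivial earlier material, namely Proposition~\ref{prop:shift_prop}.

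For (i), I would simply note that the hypothesis $|A_1 \cup\cdots\cup A_d|\leq s-1$ is strictly stronger than $|A_1 \cup\cdots\cup A_d|\leq s$. Hence any $d$ distinct sets in $\mathcal{F}$ satisfying the former also satisfy the latter, and so, by the $(d,s,t)$-condition, intersect in at least $t$ elements. For (ii), I would observe that for any choice of $d$ distinct members of $\binom{[n]}{k}$ the bound $|A_1 \cup\cdots\cup A_d|\leq |A_1|+\cdots+|A_d|=dk\leq s$ is automatic, so the union hypothesis in~\eqref{equ:(d,s,t)} is vacuous and the intersection bound holds for every $d$-tuple, which is exactly the definition of $d$-wise $t$-intersecting.

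For (iii), let $B_1,\ldots,B_d$ be distinct sets in $S_{ij}(\mathcal{F})$. From Proposition~\ref{prop:shift_prop}(ii), $|S_{ij}(\mathcal{F})|=|\mathcal{F}|$, so the map $A\mapsto S_{ij}(A)$ is a bijection $\mathcal{F}\to S_{ij}(\mathcal{F})$; in particular each $B_\ell$ has a unique preimage $A_\ell\in\mathcal{F}$, and $A_1,\ldots,A_d$ are pairwise distinct. Since $\mathcal{F}$ is $d$-wise $t$-intersecting, $|A_1\cap\cdots\cap A_d|\geq t$, and Proposition~\ref{prop:shift_prop}(iii) then gives
\[
  |B_1\cap\cdots\cap B_d|
  = \Bigl|\bigcap_{\ell=1}^d S_{ij}(A_\ell)\Bigr|
  \geq \Bigl|\bigcap_{\ell=1}^d A_\ell\Bigr|
  \geq t.
\]

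There is no real obstacle in any of the three parts. The only point worth being careful about is making the preimage argument in~(iii) precise so that distinctness of the $B_\ell$ transfers to distinctness of the $A_\ell$ before Proposition~\ref{prop:shift_prop}(iii) is invoked; this is handled by the injectivity of $S_{ij}$ on $\mathcal{F}$ recorded above.
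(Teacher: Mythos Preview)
Your proof is correct and follows essentially the same approach as the paper, which simply records that (i) and (ii) follow from the definitions and that (iii) is a consequence of Proposition~\ref{prop:shift_prop}(iii). You have merely spelled out the details the paper leaves implicit, including the bijectivity argument ensuring distinct preimages in part~(iii).
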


\begin{proof}
(i) and (ii) follow easily from definitions.
Proposition~\ref{prop:shift_prop} (iii) implies (iii) (see also \cite{MR1105464}).
\end{proof}

Shifting does not always preserve the
$(d,s,t)$-conditionally intersecting property.
The following proposition presents cases in which
shifting does preserve the $(d,s,t)$-conditionally intersecting property.

\begin{proposition}
\label{prop:Shift invariant unions}
If $A_1,\ldots, A_d \in 2^{[n]}$ are distinct
and either
\[
  |A_1 \cap\cdots\cap A_d|\geq t \qquad\textrm{or}\qquad
  |A_1 \cup\cdots\cup A_d|\geq s+2\,,
\]
then $\{S_{ij}(A_1),\ldots,S_{ij}(A_d)\}$ is $(d,s,t)$-conditionally intersecting.
\end{proposition}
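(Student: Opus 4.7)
The plan is essentially to recognise that the proposition is a direct consequence of Proposition~\ref{prop:shift_prop}(iii), which bounds how much the union and intersection of a collection of sets can shift by under a single $S_{ij}$. I would split on the two alternative hypotheses: in one case, the conclusion of the $(d,s,t)$-conditionally intersecting condition holds outright for the shifted family; in the other, the hypothesis of that condition is falsified, making it vacuous.

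More concretely, the first step would be to unpack what needs to be shown. Saying $\{S_{ij}(A_1),\ldots,S_{ij}(A_d)\}$ is $(d,s,t)$-conditionally intersecting means: whenever one can extract $d$ distinct members from this set whose union has at most $s$ elements, their intersection has at least $t$ elements. If fewer than $d$ of the shifted sets are distinct, there is nothing to verify, so I would immediately reduce to the case where $S_{ij}(A_1),\ldots,S_{ij}(A_d)$ are pairwise distinct and check the implication for this single tuple.

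In the first case, suppose $|A_1\cap\cdots\cap A_d|\geq t$. The lower bound on the intersection in Proposition~\ref{prop:shift_prop}(iii) gives
\[
   \Bigl|\bigcap_{m=1}^{d} S_{ij}(A_m)\Bigr|\;\geq\;\Bigl|\bigcap_{m=1}^{d} A_m\Bigr|\;\geq\;t\,,
\]
so the conclusion holds irrespective of the size of the shifted union. In the second case, suppose $|A_1\cup\cdots\cup A_d|\geq s+2$. The lower bound on the union in Proposition~\ref{prop:shift_prop}(iii) yields
\[
   \Bigl|\bigcup_{m=1}^{d} S_{ij}(A_m)\Bigr|\;\geq\;\Bigl|\bigcup_{m=1}^{d} A_m\Bigr|-1\;\geq\;s+1\;>\;s\,,
\]
so the hypothesis of the $(d,s,t)$-conditionally intersecting condition fails, making the condition vacuously satisfied. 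Combining the two cases proves the proposition.

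There is really no ``hard part'' here: once one notices that Proposition~\ref{prop:shift_prop}(iii) simultaneously bounds the intersection below (by $t$) and the union below (by $s+1$) in the respective cases, the proof is immediate. The only mild care required is the bookkeeping around distinctness of the shifted sets and the fact that the threshold ``$s+2$'' in the hypothesis is chosen precisely to absorb the possible drop of one in the union under shifting.
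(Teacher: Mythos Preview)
Your proof is correct and follows essentially the same approach as the paper: split on the two hypotheses and invoke Proposition~\ref{prop:shift_prop}(iii) in each case. The only cosmetic difference is that for the intersection case the paper routes through Proposition~\ref{prop:shift_and_intersectingness}(iii) (which is itself an immediate consequence of Proposition~\ref{prop:shift_prop}(iii)), whereas you appeal to Proposition~\ref{prop:shift_prop}(iii) directly; your extra remark about distinctness of the shifted sets is a nice bit of bookkeeping that the paper leaves implicit.
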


\begin{proof}
If $|A_1 \cap\cdots\cap A_d|\geq t$,
then $\{A_1,\ldots,A_d\}$ is $d$-wise $t$-intersecting,
so by Proposition~\ref{prop:shift_and_intersectingness}~(iii),
the family $\{S_{ij}(A_1),\ldots,S_{ij}(A_d)\}$ is also $d$-wise $t$-intersecting
and thus $(d,s,t)$-condition\-ally intersecting.
Now suppose that $|A_1 \cup\cdots\cup A_d| \geq s+2$.
By Proposition~\ref{prop:shift_prop}~(iii),
$| S_{ij}(A_1) \cup\cdots\cup S_{ij}(A_d) | \geq s+1$,
so $\{S_{ij}(A_1),\ldots, S_{ij}(A_d)\}$ is trivially $(d,s,t)$-conditionally intersecting.
\end{proof}

Thus by this proposition,
if $A_1,\ldots, A_d\in 2^{[n]}$ are distinct
and the family \\$\{S_{ij}(A_1),\ldots , S_{ij}(A_d)\}$ is not $(d,s,t)$-conditionally intersecting,
then
\begin{equation}
\label{equ:notconditionallyintersecting}
  |A_1 \cap\cdots\cap A_d| \leq  t-1 \qquad\textrm{and}\qquad
  |A_1 \cup\cdots\cup A_d| \leq  s + 1\,.
\end{equation}
Hence in order to characterise when
the $(d,s,t)$-conditionally intersecting condition is not preserved under shifting,
we must consider families $\mathcal{F}\in\binom{[n]}{k}$
that are $(d,s,t)$-conditionally intersecting while $S_{ij}(\mathcal{F})$ is not, for some $i,j$.

A family $\mathcal{F}\subseteq 2^{[n]}$ with this property is {\em$(i,j)$-unstable} and must
contain a subfamily $\{A_1,\ldots,A_d\}\subseteq\mathcal{F}$ that is also $(i,j)$-unstable
and thus satisfies
\begin{equation}
\label{equ:unstable2}
  |A_1 \cap\cdots\cap A_d| \geq t \qquad\textrm{or}\qquad
  |A_1 \cup\cdots\cup A_d| \geq s+1
\end{equation}
as well as the inequalities (\ref{equ:notconditionallyintersecting}) and
\begin{equation}
\label{equ:unstable3}
  |S_{ij}(A_1) \cap\cdots\cap S_{ij}(A_d)| \leq t - 1 \qquad\textrm{and}\qquad
  |S_{ij}(A_1) \cup\cdots\cup S_{ij}(A_d)| \leq s\,.
\end{equation}
Thus by Proposition~\ref{prop:shift_prop},
$(i,j)$-unstable families may be characterised as follows.
\begin{proposition}
\label{prop:unstable}
A family of $d$ distinct sets $\mathcal{A} = \{A_1,\ldots,A_d\}\subseteq\mathcal{F}\subseteq 2^{[n]}$ is $(i,j)$-unstable
if and only if
\begin{align}
       |A_1 \cap\cdots\cap A_d|
  \leq |S_{ij}(A_1) \cap\cdots\cap S_{ij}(A_d)|\hspace*{6.6mm}
 &\leq  t - 1\hspace*{11mm}\textrm{and}\nonumber\\[.5mm]
       |A_1 \cup\cdots\cup A_d|
  =    |S_{ij}(A_1) \cup\cdots\cup S_{ij}(A_d)| + 1
 &=     s + 1\,.\label{equ:unstable}
\end{align}
\end{proposition}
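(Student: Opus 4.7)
The plan is to derive both directions of the biconditional by combining the three inequality chains $(\ref{equ:notconditionallyintersecting})$, $(\ref{equ:unstable2})$ and $(\ref{equ:unstable3})$, which the paragraph preceding the statement has already established for any $(i,j)$-unstable subfamily $\mathcal{A} = \{A_1,\ldots,A_d\}$, together with the shift bounds in Proposition~\ref{prop:shift_prop}~(iii).

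For the forward direction, I would start from $(\ref{equ:notconditionallyintersecting})$: since $|A_1\cap\cdots\cap A_d|\leq t-1$, the first alternative in the disjunction $(\ref{equ:unstable2})$ is ruled out, forcing $|A_1\cup\cdots\cup A_d|\geq s+1$. Combined with the upper bound $|A_1\cup\cdots\cup A_d|\leq s+1$ from $(\ref{equ:notconditionallyintersecting})$ itself, this yields the equality $|A_1\cup\cdots\cup A_d|=s+1$. Applying Proposition~\ref{prop:shift_prop}~(iii) to the union then gives $|S_{ij}(A_1)\cup\cdots\cup S_{ij}(A_d)|\geq s$, and the opposite inequality $\leq s$ in $(\ref{equ:unstable3})$ pins this quantity to exactly $s$, establishing the union equality in the statement. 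The intersection inequality then drops out by invoking Proposition~\ref{prop:shift_prop}~(iii) once more to get $|A_1\cap\cdots\cap A_d|\leq |S_{ij}(A_1)\cap\cdots\cap S_{ij}(A_d)|$, and pairing this with the $\leq t-1$ bound from $(\ref{equ:unstable3})$.

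For the reverse direction, I would verify the definition of $(i,j)$-unstable directly from the two stated equalities. Since $|A_1\cup\cdots\cup A_d|=s+1>s$, the hypothesis of the $(d,s,t)$-conditional intersecting condition is vacuously false on the only (unordered) $d$-tuple in $\mathcal{A}$, so $\mathcal{A}$ is $(d,s,t)$-conditionally intersecting. Conversely, $|S_{ij}(A_1)\cup\cdots\cup S_{ij}(A_d)|=s$ together with $|S_{ij}(A_1)\cap\cdots\cap S_{ij}(A_d)|\leq t-1 < t$ shows that $S_{ij}(\mathcal{A})$ violates the condition, so $\mathcal{A}$ is $(i,j)$-unstable.

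This is essentially a bookkeeping lemma: the substantive content sits in Proposition~\ref{prop:shift_prop}~(iii) and in the derivation of $(\ref{equ:notconditionallyintersecting})$--$(\ref{equ:unstable3})$, both already in hand. The only point needing care is tracking the directions of the shift bounds — that shifting can enlarge an intersection by at most one and shrink a union by at most one — together with the observation that the $(d,s,t)$-condition on a family of exactly $d$ sets is vacuous as soon as the union size exceeds~$s$. I do not anticipate a genuine obstacle.
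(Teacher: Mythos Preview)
Your proposal is correct and follows essentially the same route as the paper's own proof: the paper also derives the forward direction by combining $(\ref{equ:notconditionallyintersecting})$, $(\ref{equ:unstable2})$, $(\ref{equ:unstable3})$ with Proposition~\ref{prop:shift_prop}, and handles the converse by noting that $|A_1\cup\cdots\cup A_d|>s$ makes $\mathcal{A}$ trivially $(d,s,t)$-conditionally intersecting while the shifted family fails the condition. Your write-up is simply a more detailed unpacking of the same argument.
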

\begin{proof}
If $\mathcal{A}$ is $(i,j)$-unstable,
then (\ref{equ:unstable}) follows from (\ref{equ:notconditionallyintersecting}), (\ref{equ:unstable2}) and (\ref{equ:unstable3}),
using Proposition \ref{prop:shift_prop}.
Conversely, if (\ref{equ:unstable}) is true,
then  $|A_1 \cup\cdots\cup A_d| > s$,
so $\{A_1,\ldots,A_d\}$ is $(d,s,t)$-conditionally intersecting
-- and, in contrast, $\{S_{ij}(A_1),\ldots,S_{ij}(A_d)\}$ is not:
 $|S_{ij}(A_1) \cap\cdots\cap S_{ij}(A_d)| < t$ and
 $|S_{ij}(A_1) \cup\cdots\cup S_{ij}(A_d)| \leq s$.
\end{proof}
\begin{example}
The family
$\mathcal{F} = \{ \{1,3 \} , \{2,4 \} ,\{3,5 \} \}$
is $(3,4,1)$-conditionally intersecting and is $(1,2)$-unstable since
the shift $S_{12}(\mathcal{F}) = \{ \{1,3 \} , \{1,4 \}, \{3,5 \} \}$
is not $(3,4,1)$-conditionally intersecting.
\end{example}

The following lemma describes an important property of
$(i,j)$-unstable subfamilies~$\mathcal{A}$ of a
$(d,s,t)$-conditionally intersecting family $\mathcal{F}$
that contain $d$ elements.
In particular, the sets in $\mathcal{A}$ containing $j$ but not $i$ can be shifted non-trivially.

\begin{lemma}\label{lem:UW-partition}
If $\mathcal{A} = \{A_1,\ldots,A_d\}\subseteq\mathcal{F} \subseteq 2^{[n]}$ is $(i,j)$-unstable,
then each set $A_\ell$ is contained in one of
\begin{align*}
  \mathcal{A}_{\bar{i}j}       &= \{A\in\mathcal{A} \,:\, j\in    A,\, i\notin A,\, A' = (A - \{j\}) \cup \{i\}\notin\mathcal{F}\}
                       = \{A\in\mathcal{A} \,:\, S_{ij}(A)\neq A\};\\
  \mathcal{A}_{i\bar{j}}       &= \{A\in\mathcal{A} \,:\, j\notin A,\, i\in    A\};\\
  \mathcal{A}_{\bar{i}\bar{j}} &= \{A\in\mathcal{A} \,:\, j\notin A,\, i\notin A\}\,.
\end{align*}
Furthermore,
$\mathcal{A}_{\bar{i}j}$ and $\mathcal{A}_{i\bar{j}}$ are nonempty,
and if $|A_1 \cap \cdots \cap A_d| = t - 1$, then $\mathcal{A}_{\bar{i}\bar{j}}$ is also nonempty.
\end{lemma}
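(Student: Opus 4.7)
The plan is to extract the needed structure from Proposition~\ref{prop:unstable}, which forces $|\bigcup_\ell A_\ell| = |\bigcup_\ell S_{ij}(A_\ell)| + 1 = s+1$ together with $|\bigcap_\ell A_\ell| \leq |\bigcap_\ell S_{ij}(A_\ell)| \leq t-1$. Since $S_{ij}$ only redistributes the elements $i$ and $j$ and fixes everything else, the whole analysis reduces to tracking where $i$ and $j$ sit in $\bigcup_\ell A_\ell$ and $\bigcup_\ell S_{ij}(A_\ell)$.

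First I would classify each $A_\ell$ by its intersection with $\{i,j\}$. The three named classes account for three of the four possible membership patterns, with the pattern $j\in A_\ell,\, i \notin A_\ell$ further refined by the condition $A_\ell' \notin \mathcal{F}$. The two remaining possibilities, namely $i,j \in A_\ell$ and the case $j\in A_\ell,\, i \notin A_\ell$ with $A_\ell'\in \mathcal{F}$, both satisfy $S_{ij}(A_\ell) = A_\ell$ and hence leave $j$ in $\bigcup_\ell S_{ij}(A_\ell)$. Using also that $S_{ij}$ never removes $i$ from any set, a short case analysis over whether $i$ and $j$ individually lie in $\bigcup_\ell A_\ell$ and $\bigcup_\ell S_{ij}(A_\ell)$ shows that the only scenario compatible with the unit drop in union size is: $i,j \in \bigcup_\ell A_\ell$ while $\bigcup_\ell S_{ij}(A_\ell)$ contains $i$ but not $j$. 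This rules out both disallowed patterns and delivers the partition, while simultaneously giving the nonemptiness of $\mathcal{A}_{\bar{i}j}$ (since $j \in \bigcup_\ell A_\ell$ and every set containing $j$ must now shift nontrivially) and of $\mathcal{A}_{i\bar{j}}$ (since $i \in \bigcup_\ell A_\ell$ and no $A_\ell$ can contain both $i$ and $j$).

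For the final claim, I would assume $|\bigcap_\ell A_\ell| = t-1$ and suppose for contradiction that $\mathcal{A}_{\bar{i}\bar{j}} = \emptyset$, so every $A_\ell$ lies in $\mathcal{A}_{\bar{i}j} \cup \mathcal{A}_{i\bar{j}}$. In both classes $i \in S_{ij}(A_\ell)$, forcing $i \in \bigcap_\ell S_{ij}(A_\ell)$; on the other hand $i \notin \bigcap_\ell A_\ell$ because the nonempty class $\mathcal{A}_{\bar{i}j}$ excludes $i$. Since the shift leaves membership of every element outside $\{i,j\}$ untouched, the intersection gains exactly one element, giving $|\bigcap_\ell S_{ij}(A_\ell)| = t$ and contradicting the upper bound $t-1$ from Proposition~\ref{prop:unstable}.

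The main obstacle is the bookkeeping of the four $(i,j)$-membership patterns together with the extra subcases determined by whether $A_\ell'$ belongs to $\mathcal{F}$; once that case analysis is laid out cleanly, each subsequent conclusion follows in a line or two.
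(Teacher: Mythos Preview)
Your proposal is correct and follows essentially the same route as the paper: both start from Proposition~\ref{prop:unstable}, analyse the membership of $i$ and $j$ in $\bigcup_\ell A_\ell$ versus $\bigcup_\ell S_{ij}(A_\ell)$ to force $i,j\in\bigcup_\ell A_\ell$ and $j\notin\bigcup_\ell S_{ij}(A_\ell)$, and then read off the partition and the nonemptiness of $\mathcal{A}_{\bar{i}j}$ and $\mathcal{A}_{i\bar{j}}$. For the final claim the paper argues directly (equal sizes plus the inclusion $\bigcap_\ell A_\ell\subseteq\bigcap_\ell S_{ij}(A_\ell)$ force $i\notin\bigcap_\ell S_{ij}(A_\ell)$), whereas you take the contrapositive; the content is identical.
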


\begin{proof}
By Condition~(\ref{equ:unstable}),
$i,j  \in        A_1  \cup\cdots\cup        A_d$,
$j \notin S_{ij}(A_1) \cup\cdots\cup  S_{ij}(A_d)$, and
$i \notin        A_1  \cap\cdots\cap         A_d$.
If $j \in A_\ell$, then $S_{ij}(A_\ell) \neq A_\ell$,
so $i\notin A_\ell$ and $A_\ell' = (A_\ell - \{j\}) \cup \{i\} \notin \mathcal{F}$.
Hence, any set containing $j$ is contained in $\mathcal{A}_{\bar{i}j}$
and as $j\in A_1\cup\cdots\cup A_d$,
$\mathcal{A}_{\bar{i}j}$ is nonempty.
As each set containing $j$ does not contain $i$,
each of the sets $A_\ell$ containing $i$ must be elements of $\mathcal{A}_{i\bar{j}}$.
Also, $\mathcal{A}_{i\bar{j}}$ is non-empty since $i\in A_1\cup\cdots\cup A_d$.
If~$|A_1\cap\cdots\cap  A_d| = t-1$,
then $i \notin A_1\cap\cdots\cap  A_d = S_{ij}(A_1)\cap\cdots\cap S_{ij}(A_d)$,
so there exists $A_\ell$ such that $i\notin S_{ij}(A_\ell)$,
which means $i,j \notin A_\ell$
and so the set $\mathcal{A}_{\bar{i}\bar{j}}$ must be non-empty.
\end{proof}

The main result of this section, below,
shows how $(i,j)$-instability and $(j,i)$-instability form dual symmetries
that are connected by simple bijections.
In particular,
swapping the sets in $A\in \mathcal{A}_{i\bar{j}}$ by $(A - \{i\}) \cup \{j\}$
whenever $A$ is fixed by $S_{ji}$
defines a bijection between $(i,j)$-unstable families $\mathcal{A}$
and $(j,i)$-unstable families $\mathcal{B}$.

\begin{theorem}
\label{thm:UW symmetry}
Suppose that $\mathcal{A} = \{A_1,\ldots,A_d\}\subseteq\mathcal{F}\subseteq 2^{[n]}$ is $(i,j)$-unstable.
Let $\mathcal{A}_{\bar{i}j}$, $\mathcal{A}_{i\bar{j}}$ and $\mathcal{A}_{\bar{i}\bar{j}}$ be defined as in Lemma~\ref{lem:UW-partition},
suppose that $\mathcal{A}_{\bar{i}\bar{j}}\neq\emptyset$,
and define
\[
  \mathcal{G}  = \{A\in \mathcal{A}_{i\bar{j}} \,:\, S_{ji}(A) = A\}\qquad\text{and}\qquad
  \mathcal{G}' = \{(A - \{i\}) \cup \{j\} \,:\, A \in \mathcal{G}\}\,.
\]
Then $\mathcal{G} \neq \mathcal{A}_{i\bar{j}}$, and
\[
    \mathcal{B}
  = \mathcal{A}_{\bar{i}j}\cup \mathcal{A}_{\bar{i}\bar{j}}\cup (\mathcal{A}_{i\bar{j}} - \mathcal{G}) \cup \mathcal{G}'
\]
is a $(j,i)$-unstable family of $d$ sets $\{B_1,\ldots,B_d\}\subseteq\mathcal{F}$.

Conversely, define families $\mathcal{B}_{\bar{j}i}$, $\mathcal{B}_{j\bar{i}}$ and $\mathcal{B}_{\bar{i}\bar{j}}$
analogously for $\mathcal{B}$ as in Lemma~\ref{lem:UW-partition}.
Then
\[
    \mathcal{A}
  = \mathcal{B}_{\bar{j}i}\cup \mathcal{B}_{\bar{i}\bar{j}}\cup (\mathcal{B}_{j\bar{i}} - \mathcal{H}) \cup \mathcal{H}'
\]
where
\[
  \mathcal{H}  = \{B\in \mathcal{B}_{j\bar{i}} \,:\, S_{ij}(B) = B\}\qquad\text{and}\qquad
  \mathcal{H}' = \{(B - \{j\}) \cup \{i\} \,:\, B \in \mathcal{H}\}\,.
\]
\end{theorem}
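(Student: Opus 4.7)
The plan is to verify the three assertions in sequence, using Proposition~\ref{prop:unstable} as the main tool for recognising $(j,i)$-instability and Lemma~\ref{lem:UW-partition}'s partition for careful bookkeeping of $i$- and $j$-membership. First I would prove $\mathcal{G}\neq \mathcal{A}_{i\bar{j}}$ by contradiction. If $\mathcal{G} = \mathcal{A}_{i\bar{j}}$, then $\mathcal{G}'\subseteq \mathcal{F}$, so $\mathcal{B}_0 := \mathcal{A}_{\bar{i}j}\cup \mathcal{A}_{\bar{i}\bar{j}}\cup \mathcal{G}'$ is a subfamily of $\mathcal{F}$ whose three pieces are pairwise disjoint. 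The key non-trivial case, $\mathcal{A}_{\bar{i}j}\cap \mathcal{G}' = \emptyset$, holds because every $A'\in \mathcal{A}_{\bar{i}j}$ has $(A'-\{j\})\cup\{i\}\notin \mathcal{F}$ by definition of $\mathcal{A}_{\bar{i}j}$, whereas every $(A-\{i\})\cup\{j\}\in \mathcal{G}'$ has swap $A\in \mathcal{G}\subseteq \mathcal{F}$; the other pairs are separated by $i,j$-membership, and since the swap is injective, $|\mathcal{B}_0|=d$. No member of $\mathcal{B}_0$ contains $i$, so $|\bigcup\mathcal{B}_0| = |\bigcup\mathcal{A}| - 1 = s$; further, $\mathcal{A}_{\bar{i}\bar{j}}\neq\emptyset$ excludes $j$ from $\bigcap\mathcal{B}_0 = \bigcap\mathcal{A}$, which has size at most $t-1$ by Proposition~\ref{prop:unstable} applied to $\mathcal{A}$. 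This contradicts that $\mathcal{F}$ is $(d,s,t)$-conditionally intersecting.

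Next I would show that $\mathcal{B}\subseteq \mathcal{F}$ has $d$ elements and is $(j,i)$-unstable. Disjointness of the four components of $\mathcal{B}$ and $|\mathcal{B}|=d$ repeat the bookkeeping above. For Proposition~\ref{prop:unstable} applied to $\mathcal{B}$: the fact that $\mathcal{A}_{i\bar{j}} - \mathcal{G}\neq \emptyset$ retains $i\in \bigcup\mathcal{B}$ while $\mathcal{A}_{\bar{i}j}$ retains $j$, giving $|\bigcup\mathcal{B}| = s+1$; neither $i$ nor $j$ lies in $\bigcap\mathcal{B}$, and the swap preserves every other coordinate, so $\bigcap\mathcal{B} = \bigcap\mathcal{A}$ has size at most $t-1$. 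The shift $S_{ji}$ moves exactly those $B\in \mathcal{B}$ with $i\in B$, $j\notin B$ and $(B-\{i\})\cup\{j\}\notin \mathcal{F}$, which by construction are precisely the sets in $\mathcal{A}_{i\bar{j}}-\mathcal{G}$; each loses $i$ and gains a $j$ already in the union, so $|\bigcup S_{ji}(\mathcal{B})| = s$ and $\bigcap S_{ji}(\mathcal{B}) = \bigcap\mathcal{A}$. These are the equalities required by Proposition~\ref{prop:unstable} for $(j,i)$-instability.

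Finally, the converse is a direct identification. Partitioning $\mathcal{B}$ as in Lemma~\ref{lem:UW-partition} gives $\mathcal{B}_{\bar{j}i} = \mathcal{A}_{i\bar{j}} - \mathcal{G}$, $\mathcal{B}_{j\bar{i}} = \mathcal{A}_{\bar{i}j}\cup \mathcal{G}'$, and $\mathcal{B}_{\bar{i}\bar{j}} = \mathcal{A}_{\bar{i}\bar{j}}$. For $B\in \mathcal{B}_{j\bar{i}}$ the swap $(B-\{j\})\cup\{i\}$ lies in $\mathcal{F}$ exactly when $B\in \mathcal{G}'$ (the swap then equals some $A\in \mathcal{G}$) and not when $B\in \mathcal{A}_{\bar{i}j}$, so $\mathcal{H} = \mathcal{G}'$ and $\mathcal{H}' = \mathcal{G}$; substituting and using $\mathcal{A}_{\bar{i}j}\cap \mathcal{G}' = \emptyset$ recovers $\mathcal{A}$. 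The main obstacle is Step~1: it is the only place where the ambient hypothesis that $\mathcal{F}$ (not merely $\mathcal{A}$) is $(d,s,t)$-conditionally intersecting must be used, and the role of $\mathcal{A}_{\bar{i}\bar{j}}\neq\emptyset$ in keeping $j$ out of the intersection is essential; the remainder is a careful but routine audit of $i$- and $j$-membership across the partition.
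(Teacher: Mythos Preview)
Your proof is correct and follows essentially the same route as the paper: establish $\mathcal{B}\subseteq\mathcal{F}$, $|\mathcal{B}|=d$, verify the union/intersection equalities of Proposition~\ref{prop:unstable} for $\mathcal{B}$ and $S_{ji}(\mathcal{B})$, and then identify $\mathcal{B}_{\bar{j}i}=\mathcal{A}_{i\bar{j}}-\mathcal{G}$, $\mathcal{B}_{j\bar{i}}=\mathcal{A}_{\bar{i}j}\cup\mathcal{G}'$, $\mathcal{B}_{\bar{i}\bar{j}}=\mathcal{A}_{\bar{i}\bar{j}}$, $\mathcal{H}=\mathcal{G}'$, $\mathcal{H}'=\mathcal{G}$ to recover $\mathcal{A}$. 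The only structural difference is ordering: you prove $\mathcal{G}\neq\mathcal{A}_{i\bar{j}}$ first by a direct contradiction (if $\mathcal{G}=\mathcal{A}_{i\bar{j}}$ then $\mathcal{B}_0\subseteq\mathcal{F}$ has union of size $s$ and intersection of size at most $t-1$, violating the $(d,s,t)$-condition on $\mathcal{F}$), whereas the paper first shows $S_{ji}(\mathcal{B})$ fails the $(d,s,t)$-condition, deduces $(j,i)$-instability from $\mathcal{B}\subseteq\mathcal{F}$, and then infers $\mathcal{A}_{i\bar{j}}-\mathcal{G}\neq\emptyset$ from $\mathcal{B}\neq S_{ji}(\mathcal{B})$---the same use of the ambient hypothesis, just packaged differently.
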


\begin{proof}
To prove that $\mathcal{B}\subseteq \mathcal{F}$,
note that $\mathcal{A}_{\bar{i}j}, \mathcal{A}_{\bar{i}\bar{j}}, \mathcal{A}_{i\bar{j}}\subseteq \mathcal{A}\subseteq \mathcal{F}$
and  that if $B = (A - \{i\})\cup\{j\}\in \mathcal{G}'$ where $A\in \mathcal{G}$,
then $B \in \mathcal{F}$ by definition of $S_{ji}$
since $S_{ji}(A) = A$ and $A\in \mathcal{A}_{i\bar{j}}$.

Next, note that
\[
    |(\mathcal{A}_{i\bar{j}}-\mathcal{G})\cup \mathcal{G}'|
  = |\{ A\in \mathcal{A}_{i\bar{j}} \,:\, S_{ji}(A)\neq A\}|
  + |\{ A\in \mathcal{A}_{i\bar{j}} \,:\, S_{ji}(A) =   A\}|
  = |\mathcal{A}_{i\bar{j}}|\,,
\]
so by Lemma~\ref{lem:UW-partition},
\[
    |\mathcal{B}|
  = |\mathcal{A}_{\bar{i}j}| + |\mathcal{A}_{\bar{i}\bar{j}}| + |(\mathcal{A}_{i\bar{j}} - \mathcal{G}) \cup \mathcal{G}'|
  = |\mathcal{A}_{\bar{i}j}| + |\mathcal{A}_{\bar{i}\bar{j}}| + |\mathcal{A}_{i\bar{j}}|
  = |\mathcal{A}|
  = d\,.
\]

Let us now prove that $\mathcal{B}$ is $(j,i)$-unstable.
As $\mathcal{B} \subseteq \mathcal{F}$, $\mathcal{B}$ is $(d,s,t)$-conditionally intersecting.
Since $\mathcal{A}_{\bar{i}\bar{j}}\neq \emptyset$
and since the sets in $\mathcal{B}$ are equal to those in $\mathcal{A}$
or obtained from such by replacing $j$ with $i$,
it follows that $i,j\notin B_1\cap\cdots\cap B_d$.
Hence by Proposition~\ref{prop:unstable},
$|B_1 \cap\cdots\cap  B_d | = |A_1 \cap\cdots\cap  A_d | \leq t-1$.
Similarly since $\mathcal{A}_{\bar{i}\bar{j}}\subseteq \mathcal{B}$ is non-empty and unchanged by $S_{ji}$,
it follows that $i,j\notin S_{ji}(B_1) \cap\cdots\cap  S_{ji}(B_d)$,
so $|S_{ji}(B_1) \cap\cdots\cap  S_{ji}(B_d) | = |B_1 \cap\cdots\cap  B_d | \leq t-1$.

By Lemma~\ref{lem:UW-partition}, $\mathcal{A}_{\bar{i}j}$ is nonempty,
so $j$ is contained in
$A_1 \cup\cdots\cup  A_d$,
$B_1 \cup\cdots\cup  B_d$ and
$S_{ji}(B_1) \cup\cdots\cup  S_{ji}(B_d)$.
Thus, these unions differ only in whether they contain $i$ or not.
By definition of $\mathcal{G}$,
$S_{ji}(B) \neq B$ for every $B \in \mathcal{A}_{i\bar{j}} - \mathcal{G}$.
Thus, $i \notin S_{ji}(B_1) \cup\cdots\cup  S_{ji}(B_d)$
since every set in $\mathcal{B}$ containing $i$ is in $A_{i\bar{j}} - \mathcal{G}$.
By Lemma~\ref{lem:UW-partition},
$\mathcal{A}_{i\bar{j}}$ is nonempty
so $i\in A_1 \cup\cdots\cup  A_d$,
and by Proposition~\ref{prop:unstable},
$|A_1 \cup\cdots\cup  A_d| = s+1$.
Hence,
\[
  s   =  |S_{ji}(B_1) \cup\cdots\cup  S_{ji}(B_d)|
    \leq |B_1 \cup\cdots\cup  B_d|
    \leq |A_1 \cup\cdots\cup  A_d|
      =  s+1\,.
\]
In particular,
$|S_{ji}(B_1) \cap\cdots\cap  S_{ji}(B_d) | \leq t-1$ and
$|S_{ji}(B_1)\cup\cdots\cup  S_{ji}(B_d)| = s$,
so $S_{ji}(\mathcal{B})$ is not $(d,s,t)$-conditionally intersecting,
and it follows that $\mathcal{B}$ is $(j,i)$-unstable.

Note that $\mathcal{B} \neq S_{ji}(\mathcal{B})$.
The only sets of $\mathcal{B}$ which change under $S_{ji}$ are those in~$\mathcal{G}'$.
Thus, $\mathcal{G}' \neq \emptyset$, so $\mathcal{G}\neq \mathcal{A}_{i\bar{j}}$.

By Lemma \ref{lem:UW-partition},
$\mathcal{B} = \mathcal{B}_{\bar{j}i} \cup \mathcal{B}_{j\bar{i}} \cup \mathcal{B}_{\bar{j}\bar{i}}$,
where
\begin{align*}
  \mathcal{B}_{\bar{j}i}
  &= \{B\in\mathcal{B} \,:\, i\in B,\, j\notin B,\, B' = (B - \{i\}) \cup \{j\}\notin\mathcal{F}\}\\
  &= \{B\in\mathcal{B} \,:\, S_{ji}(B)\neq B\}.
\end{align*}
Comparing this expression for $\mathcal{B}$ with the definition
$\mathcal{B} = \mathcal{A}_{\bar{i}j}\cup \mathcal{A}_{\bar{i}\bar{j}}\cup (\mathcal{A}_{i\bar{j}} - \mathcal{G}) \cup \mathcal{G}'$
yields the identities
\[
  \mathcal{B}_{\bar{j}i} = \mathcal{A}_{i\bar{j}} - \mathcal{G}\,,\qquad
  \mathcal{B}_{j\bar{i}} = \mathcal{A}_{\bar{i}j}\cup \mathcal{G}'\qquad\text{and}\qquad
  \mathcal{B}_{\bar{j}\bar{i}} = \mathcal{A}_{\bar{i}\bar{j}}\,.
\]
Define
\[
  \mathcal{C} = \mathcal{B}_{\bar{j}i}       \cup
       \mathcal{B}_{\bar{j}\bar{i}} \cup
       (\mathcal{B}_{j\bar{i}} - \mathcal{H}) \cup
       \mathcal{H}'\,.
\]
We wish to show that $\mathcal{A} = \mathcal{C}$ and do this by showing
that $\mathcal{H} = \mathcal{G}'$ and $\mathcal{H}' = \mathcal{G}$.

The elements $B$ of $\mathcal{G}'$ are the sets of the form $B = (A - \{i\}) \cup \{j\}$
for some set
$A\in \mathcal{G} \subseteq \mathcal{A}_{i\bar{j}}$.
Since $(B - \{j\}) \cup \{i\} = A \in \mathcal{F}$,
it follows that $S_{ij}(B) = B$.
Thus, $\mathcal{H} = \mathcal{G}'$ and hence
\[
  \mathcal{H}' = \{(B - \{j\}) \cup \{i\} \,:\, B \in \mathcal{G'}\} = \mathcal{G}\,.
\]
Now by Lemma \ref{lem:UW-partition},
every $A \in \mathcal{A}_{\bar{i}j}$ satisfies $S_{ij}(A) \neq A$,
so   $\mathcal{H}  \cap \mathcal{A}_{\bar{i}j} = \emptyset$ and
thus $\mathcal{G}' \cap \mathcal{A}_{\bar{i}j} = \emptyset$.
Hence by Lemma \ref{lem:UW-partition},
\begin{align*}
  \mathcal{C} &= \mathcal{B}_{\bar{j}i}       \cup
        \mathcal{B}_{\bar{j}\bar{i}} \cup
        (\mathcal{B}_{j\bar{i}} - \mathcal{H}) \cup \mathcal{H}'\\
     &= \bigl((\mathcal{A}_{i\bar{j}} - \mathcal{G})  \cup \mathcal{H}'\bigr) \cup
        \mathcal{A}_{\bar{j}\bar{i}} \cup
        \bigl((\mathcal{A}_{\bar{i}j}\cup \mathcal{G}') - \mathcal{H}\bigr)\\
     &= \bigl((\mathcal{A}_{i\bar{j}} - \mathcal{G}) \cup \mathcal{G}\bigr) \cup
        \mathcal{A}_{\bar{j}\bar{i}} \cup
        \bigl((\mathcal{A}_{\bar{i}j}\cup \mathcal{G}') - \mathcal{G}'\bigr)\\
     &= \mathcal{A}_{i\bar{j}} \cup
        \mathcal{A}_{\bar{j}\bar{i}} \cup
        \mathcal{A}_{\bar{i}j}\\
     &= \mathcal{A}\,.
\end{align*}
\end{proof}

By Proposition~\ref{prop:shift_prop},
the size of a $(d,2k)$-conditionally intersecting family $\mathcal{F}\subseteq\binom{[n]}{k}$ is preserved under shifting.
Iterative shifts to $\mathcal{F}$ will result in a family $\mathcal{F}^\prime$
that is either stable or is $(i,j)$-unstable for some $i,j$;
that is, a shift applied to $\mathcal{F}^\prime$
will result in a family that is not $(d,2k)$-conditionally intersecting.

An upper bound on the size of the former is determined by Theorem~\ref{thm:Main}.
Thus to determine the maximum possible size of $(d,2k)$-conditionally intersecting families,
it suffices to consider families that are $(i,j)$-unstable for some $i,j$.

\section{Non-intersecting \texorpdfstring{$(d,2k)$}{}-conditionally intersecting families}
\label{sec:nonintersectingintersecting}

$ $\\
Theorem~\ref{thm:MR0398842} by Frankl~\cite{MR0398842} provides
an upper bound on the size of $d$-wise intersecting families.
Muyabi's Conjecture, if true, would improve this result
by providing an upper bound on size of families
that are $(d,2k)$-conditionally intersecting.
The two bounds - and the bound in Theorem~\ref{thm:Main} -
turn out to be identical,
and this bound is indeed in each case achieved by star families.

To further sharpen Mubayi's Conjecture,
it may be useful to distinguish more explicitly between the two conditions given
by Frankl's theorem and Mubayi's Conjecture.
In particular, it is worth considering the upper bounds on the size of
families addressed by the former (i.e., $d$-wise intersecting families) and the
families addressed by the latter (i.e., $(d,2k)$-conditionally intersecting families).
Hence in this section, we consider families $\mathcal{F}\in\binom{[n]}{k}$ that are $(d,2k)$-conditionally intersecting
but which are {\em not} $d$-wise intersecting;
that is, for all distinct sets $A_1,\ldots,A_d \in\mathcal{F}$
\begin{align*}
   A_1\cap\cdots\cap A_d &\neq \emptyset
   \;\;\textrm{whenever}\;\;
  |A_1\cup \cdots\cup A_d| \leq 2k   \\
  \text{but }
   A_1\cap\cdots\cap A_d &=    \emptyset
  \;\;\textrm{for at least some $d$ distinct sets}\;\:
   A_1,\ldots,A_d\in\mathcal{F}\,.
\end{align*}
In particular,
we pose the following conjecture.
\begin{conjecture}
\label{conj:nonintersecting}
For $k\geq d \geq 3$ and sufficiently large $n$,
each family $\mathcal{F}$ that is $(d,2k)$-conditionally intersecting
but which is not intersecting
has size at most
\[
  \binom{n-k-1}{k-1} + 1\,.
\]
Furthermore, equality holds if and only if
\[
  \mathcal{F} = \left\lbrace A \in \binom{[n]}{k} \,:\, x \in A \textrm{ and } A \cap B =\emptyset\right\rbrace \cup \{ B\}
\]
for some fixed $x \in [n]$ and $B \in \binom{[n]}{k}$ such that $x \notin B$.
\end{conjecture}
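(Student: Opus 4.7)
The plan is to fix any disjoint pair $A,B\in\mathcal{F}$, which exists because $\mathcal{F}$ is not intersecting, and to show that $\mathcal{F}\setminus\{B\}$ is contained in a star through some element $x\in [n]\setminus B$ consisting entirely of sets disjoint from $B$. Granting this, the number of $k$-subsets of $[n]\setminus B$ containing $x$ is $\binom{n-k-1}{k-1}$, so $|\mathcal{F}|\leq\binom{n-k-1}{k-1}+1$, and the equality case forces $\mathcal{F}$ into the conjectured extremal form. The main thrust is thus a stability-type result: the existence of a disjoint pair in a $(d,2k)$-conditionally intersecting family forces it to be drastically smaller than the full-star bound $\binom{n-1}{k-1}$ afforded by Frankl's theorem.

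First I would extract the immediate consequence of the conditionally intersecting property applied to $A$ and $B$: for distinct $C_1,\ldots,C_{d-2}\in\mathcal{F}\setminus\{A,B\}$, if every $C_i\subseteq A\cup B$ then $\{A,B,C_1,\ldots,C_{d-2}\}$ has union of size exactly $2k$ yet empty intersection (as $A\cap B=\emptyset$), a contradiction. Hence at most $d-3$ members of $\mathcal{F}\setminus\{A,B\}$ can lie inside $A\cup B$; for $d=3$ this means every other member of $\mathcal{F}$ meets $R:=[n]\setminus(A\cup B)$. Next I would argue that $\mathcal{F}\setminus\{B\}$ is itself intersecting. If some $C,D\in\mathcal{F}\setminus\{B\}$ were disjoint, combining the two disjoint pairs $\{A,B\}$ and $\{C,D\}$ with carefully chosen additional members of $\mathcal{F}$ produces many $d$-tuples with small unions and empty intersections, and for sufficiently large $n$ this is incompatible with $|\mathcal{F}|>\binom{n-k-1}{k-1}+1$. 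Once $\mathcal{F}\setminus\{B\}$ is intersecting and still large, a Hilton-Milner-type stability result, or the known stability forms of Frankl's theorem, identifies a common element $x$ shared by every member of $\mathcal{F}\setminus\{B\}$; because $A\in\mathcal{F}\setminus\{B\}$ and $A\cap B=\emptyset$, we automatically obtain $x\in A$ and hence $x\notin B$.

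The final step is to show that every $C\in\mathcal{F}\setminus\{B\}$ is disjoint from $B$. Suppose instead that some such $C$ contains an element of $B$. Then for any $C'\in\mathcal{F}\setminus\{B\}$ with $x\in C'$ and $C'\cap B=\emptyset$, the triple $\{B,C,C'\}$ satisfies $|B\cup C\cup C'|=3k-|C\cap B|-|C\cap C'|$, so whenever $|C\cap B|+|C\cap C'|\geq k$ the triple violates the conditionally intersecting condition because $B\cap C'=\emptyset$; this forces $C\setminus B\not\subseteq C'$ for all such $C'$, which severely restricts the number of admissible star members and contradicts $|\mathcal{F}|>\binom{n-k-1}{k-1}+1$ for large $n$. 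An analogous estimate handles $d\geq 4$, using $d$-tuples involving $B$, $C$ and $d-2$ members of the star through $x$. The principal obstacle is the intersecting reduction: converting the local constraint ``every $C\in\mathcal{F}\setminus\{A,B\}$ meets $R$'' into the global statement that $\mathcal{F}\setminus\{B\}$ is a star almost certainly requires a delta-system or kernel argument in the spirit of Frankl--F\"uredi, or a sharp stability form of Mubayi's Conjecture (presently known only for $d=3$ and very large $n$). Making the quantitative dependence on $n$ explicit for all admissible $d$ is precisely why the statement remains conjectural.
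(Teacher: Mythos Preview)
The statement you are addressing is posed in the paper as an open \emph{conjecture}, not a theorem; the paper gives no proof of it. What the paper does offer is circumstantial evidence: the extremal family has size asymptotic to $\binom{n-1}{k-1}$, which matches the known asymptotic maximum for $(d,2k)$-conditionally intersecting families; a counterexample family $\mathcal{G}$ shows the bound fails for small $n$; and Proposition~\ref{prop:twinstar} establishes the analogous statement in the degenerate case $k=2$, $d=3$. There is therefore no ``paper's own proof'' to compare your proposal against.

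Your outline is a reasonable heuristic strategy, and you are candid that the central step---showing that $\mathcal{F}\setminus\{B\}$ is intersecting and in fact a star---requires a stability result of a strength not currently available for general $d$. A couple of more specific gaps are worth flagging. First, you fix one disjoint pair $A,B$ and aim to show $\mathcal{F}\setminus\{B\}$ is a star; but nothing rules out several disjoint pairs in $\mathcal{F}$, and the choice of which set to delete is not canonical, so the reduction to a single ``bad'' set $B$ already needs justification. Second, in the final step your exclusion argument only forbids those star members $C'$ with $C\setminus B\subseteq C'$; for $|C\cap B|=1$ there are only $\binom{n-2k+1}{1}=n-2k+1$ such $C'$, far fewer than the gap $\binom{n-1}{k-1}-\binom{n-k-1}{k-1}$ you need to close, so the counting as stated does not yield the claimed contradiction. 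These are genuine obstacles, consistent with the paper's decision to leave the statement as a conjecture.
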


Note that the definition of the family $\mathcal{F}$ in the conjecture above
differs from the maximally-sized intersecting non-star family of the Hilton-Milner Theorem~\cite{MR0219428}
(see also~\cite{MR826944}),
in which the condition $A\cap B = \emptyset$
is replaced by the negated condition $A \cap B \neq\emptyset$.

An indication that Conjecture~\ref{conj:nonintersecting} might be true is
that the family $\mathcal{F}$ has size $\binom{n-k-1}{k-1}+1$ which, asymptotically,
converges to $\binom{n-1}{k-1}$, the size of star families on $n$ elements
which, in turn, are asymptotically the largest $(d,2k)$-conditionally intersecting families;
see~\cite{MR2591419,MR2507945}.

However, Conjecture~\ref{conj:nonintersecting} would be false if $n$ were allowed
to be small,
since $\mathcal{F}$ would then be smaller than other non-intersecting $(d,2k)$-conditionally intersecting
families.
For instance,
consider
\[
  \mathcal{G} = \{B_1, B_2\} \cup \left\{ A \in \binom{[n]}{k} \,:\, x,\, y\in A,\text{  and  }
         A \cap ([n] - (B_1 \cup B_2))\neq \emptyset \right\}
\]
where $B_1, B_2 \in \binom{[n]}{k}$ are fixed disjoint sets,
and $x \in B_1$ and  $y \in B_2$ are fixed elements.
The family $\mathcal{G}$ has size
$\binom{n-2}{k-2}-\binom{2k-2}{k-2}+2$ and is $(d,2k)$-intersecting,
since the only disjoint sets in $\mathcal{G}$ are $B_1$ and $B_2$ and
by definition any other member of $\mathcal{G}$ must contain an element
not in $B_1$ or $B_2$. For $n=2k+1$, $\mathcal{F}$ has size $k+1$
whereas $\mathcal{G}$ has size
\[
  \binom{(2k+1)-2}{k-2}-\binom{2k-2}{k-2}+2 =
  \binom{2k-1}{k-2}-\binom{2k-2}{k-2}+2    =
  \binom{2k-2}{k-3}+2\,.
\]
which is larger than $k+1$ for $k \geq 4$.

\medskip

For completeness, let us consider families of pairs; that is, the case in which $k=2$.
For this purpose, define a {\em twin 2-star} on $[n]$ to be any family $\mathcal{F} \subseteq \binom{[n]}{2}$ consisting, for distinct and fixed $x,y \in [n]$,
of $n-2$ unordered pairs $A\subseteq [n]$ of the form $\{z,z'\}$ where $z\in [n]-\{x,y\}$
and $z'\in\{x,y\}$ so that each element $z\in [n]-\{x,y\}$ appears exactly once in some set~$A\in\mathcal{F}$.
Note that twin stars generalise $\mathcal{F}$ in the conjecture for $k=2$.

\begin{proposition}
\label{prop:twinstar}
If $\mathcal{F} \subseteq \binom{[n]}{2}$ is $(3,4)$-conditionally intersecting
but is not intersecting, then
\[
  |\mathcal{F}| \leq n-2\,,
\]
and equality holds if and only if $\mathcal{F}$ is a twin 2-star.
\end{proposition}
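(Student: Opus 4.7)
The plan is to view each pair in $\mathcal{F}\subseteq\binom{[n]}{2}$ as an edge of a graph $G$ on vertex set $[n]$, and exploit the highly restrictive shape that the $(3,4)$-conditionally intersecting condition imposes on $G$. Since each set has size $2$, any three distinct pairs $A_1,A_2,A_3$ span at least $3$ vertices, and the triples whose union has size at most $4$ fall into exactly three isomorphism types: the triangle $K_3$ (three edges on three vertices), and, on four vertices, either a claw $K_{1,3}$ or a path $P_4$. Of these, only the claw has a common element in all three edges. Thus the $(3,4)$-conditionally intersecting hypothesis is equivalent to $G$ being triangle-free and containing no subgraph $P_4$.

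The main structural lemma I would then establish is that every triangle-free $P_4$-free graph is a vertex-disjoint union of stars $K_{1,m}$. Working on a single connected component, let $v$ be a vertex of maximum degree $d\geq 1$. Triangle-freeness forbids edges inside $N(v)$, and if some $u\in N(v)$ had a neighbour $w\notin N[v]$ then for any $u'\in N(v)\setminus\{u\}$ (which exists if $d\geq 2$) the four vertices $u'{-}v{-}u{-}w$ would form a $P_4$; so $V(G)=N[v]$ and $G=K_{1,d}$. The degree-$1$ case is immediate.

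Now I would use the hypothesis that $\mathcal{F}$ is not intersecting. The two guaranteed disjoint pairs must come from two different star components of $G$, because all edges inside a single star share the centre. Write the non-trivial star components as $K_{1,m_1},\ldots,K_{1,m_r}$ with each $m_i\geq 1$ and $r\geq 2$. Counting vertices against the whole ground set yields
\[
  |\mathcal{F}| \;=\; \sum_{i=1}^{r} m_i \;=\; \sum_{i=1}^{r}(m_i+1) - r \;\leq\; n-r \;\leq\; n-2\,,
\]
which is the claimed bound. Equality forces $r=2$ and no isolated vertices, so $G$ is a disjoint union of exactly two stars whose centres $x$ and $y$, together with $n-2$ leaves, partition $[n]$. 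Each $z\in[n]-\{x,y\}$ is then a leaf of precisely one of the two stars and lies in precisely one pair $\{x,z\}$ or $\{y,z\}$, which is exactly the definition of a twin $2$-star; the converse is immediate from the structural lemma.

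The only real obstacle is the structural characterisation of triangle-free $P_4$-free graphs as disjoint unions of stars; the three-edge-subgraph enumeration is routine but needs to be done cleanly, and the argument that the $P_4$ is unavoidable once one escapes a single star has to be stated carefully. Once that lemma is in hand, the bound and the equality case reduce to a short vertex count.
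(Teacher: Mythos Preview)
Your proof is correct and follows essentially the same approach as the paper: both view $\mathcal{F}$ as a graph, observe that the $(3,4)$-conditionally intersecting condition forbids triangles and $P_4$'s, conclude that each connected component is a star, and then count edges as $n$ minus the number of components. You spell out the enumeration of three-edge configurations and the proof that triangle-free $P_4$-free components are stars, which the paper asserts without detail; the counting and equality analysis are otherwise identical.
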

\begin{proof}
Suppose that $\mathcal{F}$ is $(3,4)$-conditionally intersecting but not intersecting;
then for all distinct sets $ A_1,A_2,A_3 \in\mathcal{F}$
\begin{align}
\label{eqn:(3,4)-conditionally intersecting}
   A_1\cap A_2 \cap A_3 &\neq \emptyset
   \;\;\;\textrm{whenever}\;\;\;
  |A_1\cup A_2 \cup A_3| \leq 4\,.
\end{align}
Consider $\mathcal{F}$ to be the graph on $n$ vertices whose edges are the members of~$\mathcal{F}$.
Then (\ref{eqn:(3,4)-conditionally intersecting})
asserts that $\mathcal{F}$ contains no triangles or paths of length~3.
Therefore,
every connected component of $\mathcal{F}$ is a star.
If there are $m$ connected components in $\mathcal{F}$,
then there are $n-m$ edges in $\mathcal{F}$.
Since $\mathcal{F}$ is not intersecting,
$\mathcal{F}$ cannot be a star, so $m \geq 2$.
Hence, $|\mathcal{F}| \leq n-2$ and equality holds only if
$\mathcal{F}$ is a twin $2$-star.
\end{proof}

\section{On \texorpdfstring{$(d,s)$}{}-conditionally intersecting families for \texorpdfstring{$s < 2k$}{} }
\label{sec:families}
$ $\\
\noindent
In this section,
we review and extend some of the work by Frankl and F\"{u}redi~\cite{MR729787}
on $(d,s)$-conditionally intersecting families for $s < 2k$.
Partition $[n]$ into $k$ almost equal parts
$X_1,\ldots,X_k$ so that
$\bigl\lfloor\frac{n}{k}\bigr\rfloor \leq |X_i| \leq \bigl\lfloor \frac{n}{k}\bigr\rfloor + 1$,
and define
\[
  \mathcal{H}_k = \left\{ A \in \binom{[n]}{k} \,:\, |A \cap X_i| = 1 \textrm{ for all } i= 1,\ldots,k\right\}\,.
\]
Frankl and F\"{u}redi~\cite{MR729787} proved that
this family is $(3,2k-1)$-conditionally intersecting
and of order $\Theta(n^k)$.
Thus for fixed $k$, $\mathcal{H}_k$ is larger than a star for sufficiently large $n$.
This shows that Mubayi's Conjecture
cannot be extended to hold for $(d,s)$-conditionally intersecting families for $s < 2k$.
Motivated by these facts,
Frankl and F\"uredi~\cite{MR729787}
conjectured that $\mathcal{H}_k$ is, up to isomorphism,
uniquely largest among $(3,2k-1)$-conditionally intersecting families on $[n]$,
at least for sufficiently large $n$.
We extend this conjecture as follows.

\begin{conjecture}
\label{Conj Hr}
If $\mathcal{F} \subseteq \binom{[n]}{k}$ is $(d,2k-1)$-conditionally intersecting,
then, for sufficiently large $n$,
\[
  |\mathcal{F}| \leq |\mathcal{H}_k| \,.
\]
Furthermore,
equality holds if and only if $\mathcal{F} = \mathcal{H}_k$ up to isomorphism.
\end{conjecture}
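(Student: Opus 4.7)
The plan is to induct on $d$, with the base case $d=3$ being the original Frankl--F\"uredi conjecture recalled in this section. That base case is itself long-standing and open, so any attack on Conjecture~\ref{Conj Hr} is necessarily speculative; I outline what seems the most promising line. First, observe that $|\mathcal{H}_k| = \prod_{i=1}^{k} |X_i| = \Theta(n^k)$ for fixed $k$, whereas a star has size only $\binom{n-1}{k-1} = \Theta(n^{k-1})$. Consequently Theorem~\ref{thm:Main} does not apply here: it bounds only stable families, and $\mathcal{H}_k$ itself is not stable (for instance, applying $S_{1,|X_1|+1}$ to a transversal whose $X_1$-coordinate is not $1$ produces a set with two elements in $X_1$ and none in $X_2$, outside $\mathcal{H}_k$). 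Thus, in contrast with Section~\ref{sec:main}, shifting cannot be used directly to pass to a canonical representative, and the extremal family must be sought among the $(i,j)$-unstable families studied in Section~\ref{sec:dst-conditional}.

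For the base case $d=3$, the strategy would be structural. Call a pair $A,B \in \mathcal{F}$ \emph{close} if $|A \cup B| \leq 2k-2$, equivalently $|A \cap B| \geq 2$. Whenever three distinct sets are pairwise close and have union at most $2k-1$, the $(3,2k-1)$ condition forces a common element. In $\mathcal{H}_k$ the closeness relation richly encodes the partition $X_1,\dots,X_k$, because two transversals are close precisely when they agree on all but at most two coordinates. The key lemma I would aim for is a stability statement: any $(3,2k-1)$-conditionally intersecting family of near-extremal size induces, via repeated intersections of close pairs, a partition $Y_1,\dots,Y_k$ of a large subset of $[n]$ such that almost every member of $\mathcal{F}$ is a transversal of $Y_1,\dots,Y_k$. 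A direct counting bound $|\mathcal{F}| \leq \prod_i |Y_i|$ then follows, and a short perturbation argument upgrades ``almost every'' to ``every'' in the equality case, yielding $\mathcal{F} \cong \mathcal{H}_k$.

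For the inductive step $d-1 \to d$, the plan is to note that a $(d,2k-1)$-conditionally intersecting family $\mathcal{F}$ of near-extremal size contains many $(d-1)$-subfamilies of near-extremal size; each such subfamily must, by induction, sit inside a transversal structure. A patching argument, using the fact that a single set violating the global transversal structure can be combined with $d-1$ suitably chosen transversals to produce a $d$-tuple of union at most $2k-1$ with empty intersection, should then force $\mathcal{F}$ itself to sit inside a single transversal structure compatible with some $Y_1,\dots,Y_k$.

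The principal obstacle, unsurprisingly, is the base case. The Frankl--F\"uredi conjecture has resisted attack since the 1980s, and no existing technique extracts the conjectured partition structure cleanly from the close-pair data. The $\Delta$-system/sunflower method yields a kernel, but one whose shape need not match $\mathcal{H}_k$; and the spread-approximation and junta machinery that has recently resolved many $s \geq 2k$ problems (including the large-$k$ regime of Mubayi's Conjecture via \cite{Lifshitz}) does not obviously transfer to $s < 2k$, precisely because the extremal object is no longer a star and the underlying approximation inequalities change character. Any honest proof of Conjecture~\ref{Conj Hr} will, it seems, have to clear this hurdle first.
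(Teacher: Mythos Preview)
This statement is a \emph{conjecture}, not a theorem: the paper provides no proof whatsoever, only posing it as an extension of the (still open) Frankl--F\"uredi conjecture. There is therefore nothing against which to compare your proposal.

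That said, your write-up is honest about this. You correctly identify that the $d=3$ base case is exactly the Frankl--F\"uredi conjecture, that it has been open since the 1980s, and that no current machinery (shifting, $\Delta$-systems, junta/spread methods) is known to resolve it. Your inductive outline from $d-1$ to $d$ is plausible as a heuristic but cannot be assessed until the base case exists; and even then the ``patching argument'' you sketch would need real work, since a near-extremal $(d,2k-1)$-conditionally intersecting family need not contain any near-extremal $(d-1,2k-1)$-conditionally intersecting subfamily in an obvious way. The upshot is that your proposal is not a proof and does not claim to be one, which is the correct stance here.
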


The family $\mathcal{H}_k$ can be generalised in the following way.
Partition $[n]$ into $r \leq~k$ fixed parts (not necessarily near-equal) $X_1,\ldots,X_r$.
Choose fixed non-negative integers $x_1,\ldots,x_r$ such that
$\sum_{i=1}^{r}x_i \leq k$
and define
\begin{equation*}\label{General form}
  \mathcal{G}_r = \left\{ A \in \binom{[n]}{k} \,:\, |A \cap X_i| \geq x_i \textrm{ for all } i= 1,\ldots,r\right\}\,.
\end{equation*}

Several maximal families from Extremal Set Theory
can be expressed in this~form.

\begin{example}\label{exa:maxfamily_star}{\rm
Stars are the unique maximum families
for the Erd\H{o}s-Ko-Rado Theorem for $n > 2k$ and for Theorem \ref{thm:MR0398842}.
Each star can be expressed as follows for some $y\in [n]$:
\[
    \left\{ A \in \binom{[n]}{k} \,:\, y\in F \right\}
  = \left\{ A \in \binom{[n]}{k} \,:\, |A \cap X_1| \geq x_1 \;\;\text{and}\;\; |A \cap X_2| \geq x_2\right\}\,
\]
where $X_1 = \{y\}$ and $X_2 = [n]-\{y\}$ partition~$[n]$
and where
$x_1 = 1$ and $x_2 = 0$.}
\end{example}

\begin{example}\label{exa:maxfamily_Fi}{\rm
Define
$X_1 = [t+2j]$ and
$X_2 = [n]-X_1$ and
$x_1 = t+j$ and
$x_2 = 0$ for some integers $t\geq 1$ and $j\geq 0$.
The families
\[
    \mathcal{F}_j
  = \left\{ A \in \binom{[n]}{k}\! : |A \cap [t+2j]| \geq t+j \right\}
  = \left\{ A \in \binom{[n]}{k}\! : |A \cap X_i| \geq x_i\;\text{for}\; i = 1,2\right\}
\]
have be shown in~\cite{MR1429238} to be the largest
$t$-intersecting families $\mathcal{F} \subseteq \binom{[n]}{k}$.}
\end{example}

\begin{example}\label{exa:maxfamily_Fprime}{\rm
Define
$X_1 = [t]$,
$X_2 = [k+1]-[t]$,
$X_3 = [n]-X_1-X_2$,
$x_1 = t$,
$x_2 = 1$ and
$x_3 = 0$
for some integers $k,t\geq 1$.
The family $\mathcal{F}_1$ from Example~\ref{exa:maxfamily_Fi} and
the family
\begin{align*}
     \mathcal{F}^\prime
  &= \left\{ A \in \binom{[n]}{k} \,:\, [t] \subseteq A ,\, \bigl([k+1]-[t]\bigr) \cap A \neq \emptyset \right\}
\cup \bigl\{ [k+1]-\{ i \} \,:\, i \in [k+1]\bigr\}\\
  &= \left\{ A \in \binom{[n]}{k} \,:\, |A \cap X_i| \geq x_i \textrm{ for all } i = 1,2,3\right\}
\cup \bigl\{ [k+1]-\{ i \} \,:\, i \in [k+1]\bigr\}
\end{align*}
have been shown in \cite{MR1405994} to be, up to isomorphism,
the maximum $t$-intersecting families that satisfy
$\left|\bigcap_{A \in \mathcal{F}}A\right|<~t$.}
\end{example}

\begin{lemma}\label{Min int size}
If $n > k \geq 2$,
   $d \geq 2$ and
   $\mathcal{F} = \left\lbrace A \in 2^{[n]} \,:\,  |A| \geq k \right\rbrace$
contains $d$ sets $A_1, \ldots, A_{d}$ such that
$A_1 \cap \cdots \cap A_{d} = \emptyset$, then
\[
  n \geq \Bigl\lceil\frac{dk}{d-1}\Bigr\rceil \,.
\]
\end{lemma}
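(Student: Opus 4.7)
The plan is to translate the emptiness of the intersection into a covering condition on $[n]$ by complements, then apply a one-line counting argument.

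First I would observe that the hypothesis $A_1\cap\cdots\cap A_d = \emptyset$ means that every element $x \in [n]$ fails to lie in at least one $A_i$, i.e., $x$ belongs to the complement $[n]\setminus A_i$ for some $i$. Equivalently,
\[
  [n] \;=\; \bigcup_{i=1}^{d} \bigl([n]\setminus A_i\bigr)\,.
\]
Taking sizes and using the union bound together with the size hypothesis $|A_i|\geq k$, I would deduce
\[
  n \;\leq\; \sum_{i=1}^{d} \bigl(n - |A_i|\bigr) \;=\; dn - \sum_{i=1}^d |A_i| \;\leq\; dn - dk\,,
\]
which rearranges to $dk \leq (d-1)n$ and hence $n \geq \frac{dk}{d-1}$. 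Since $n$ is an integer, this upgrades to $n \geq \bigl\lceil \tfrac{dk}{d-1}\bigr\rceil$, as required.

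There is essentially no obstacle here; the only subtlety is to recognise that the hypothesis $A_1\cap\cdots\cap A_d=\emptyset$ is equivalent to the complements $[n]\setminus A_i$ forming a cover of $[n]$, after which the argument is a single inequality and a ceiling. The conditions $n>k\geq 2$ and $d\geq 2$ are used implicitly to ensure the division by $d-1$ is well-defined and that the families and sets are non-degenerate.
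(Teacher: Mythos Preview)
Your proof is correct and is essentially the same double-counting argument as the paper's: the paper counts incidences $(a,i)$ with $a\in A_i$, noting that each $a\in[n]$ lies in at most $d-1$ of the sets, while you count the complementary non-incidences via the union bound on the $[n]\setminus A_i$; both rearrange to $(d-1)n\geq dk$. The only cosmetic difference is that the paper first trims each $A_i$ down to size exactly $k$ before counting, whereas you work directly with the inequality $|A_i|\geq k$.
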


\begin{proof}
If $A_1,\ldots, A_d \in \mathcal{F}$ and $A_1 \cap \cdots \cap A_{d} = \emptyset$,
then by removing elements from each set $A_i$ of size greater than~$k$,
we obtain $d$ sets, $A^\prime_1, \ldots, A_{d}^\prime \in \binom{[n]}{k}$ ,
such that $A^\prime_1 \cap \cdots \cap A_{d}^\prime = \emptyset$.
Hence,
\[
       n(d-1)
  \geq \sum_{a\in[n]}|\{i\in[d]\,:\, a\in A^\prime_i\}|
    =  \sum_{i\in[d]}|A^\prime_i| \geq dk\,,
\]
so $n \geq \bigl\lceil \frac{dk}{d-1}\bigr\rceil$.
\end{proof}

We now determine the values of $d$ and $s$ for which
$\mathcal{G}_r$ is a $(d,s)$-conditionally intersecting family.
If $\mathcal{G}_r$ is $d$-wise intersecting,
then it is trivially $(d,s)$-conditionally intersecting for any~$s$.
The following theorem completes the determination of the values of~$d$ and~$s$.

\begin{theorem}\label{Thm Counter}
If the family $\mathcal{G}_r$ is not $d$-wise intersecting,
then it is $(d,s)$-conditionally intersecting but not
$(d,s+1)$-conditionally intersecting,
where
\[
  s = \max\biggl\{ \Bigl\lceil\frac{dk}{d-1}\Bigr\rceil -1,\sum_{i=1}^{r}\Bigl\lceil\frac{dx_i}{d-1}\Bigr\rceil -1 \biggr\}\,.
\]
\end{theorem}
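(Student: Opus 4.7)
Write $M_1 = \lceil dk/(d-1)\rceil$ and $M_2 = \sum_{j=1}^{r}\lceil dx_j/(d-1)\rceil$, so that $s = \max(M_1,M_2) - 1$. The plan is to verify the two halves of the statement separately.

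\emph{Part 1: $\mathcal{G}_r$ is $(d,s)$-conditionally intersecting.} Take distinct $A_1,\ldots,A_d \in \mathcal{G}_r$ with $A_1 \cap\cdots\cap A_d = \emptyset$; the goal is $|A_1 \cup\cdots\cup A_d| \geq s+1$. Empty intersection forces every element of the union to lie in at most $d-1$ of the $A_\ell$. Combined with $|A_\ell| = k$, this yields
\[
  (d-1)\Bigl|\bigcup_\ell A_\ell\Bigr| \;\geq\; \sum_\ell |A_\ell| \;=\; dk,
\]
so $|\bigcup_\ell A_\ell| \geq M_1$; this is the same averaging argument used in the proof of Lemma~\ref{Min int size}. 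The identical observation applied inside each block $X_j$, using $|A_\ell \cap X_j| \geq x_j$, gives $|\bigcup_\ell (A_\ell \cap X_j)| \geq \lceil dx_j/(d-1)\rceil$. Summing over $j$ and using the disjointness of the $X_j$ yields $|\bigcup_\ell A_\ell| \geq M_2$, so $|\bigcup_\ell A_\ell| \geq \max(M_1,M_2) = s+1$.

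\emph{Part 2: $\mathcal{G}_r$ is not $(d,s+1)$-conditionally intersecting.} Here I would exhibit $d$ distinct sets $A_1,\ldots,A_d \in \mathcal{G}_r$ with empty common intersection and union of size exactly $s+1$. For each $j$ with $x_j \geq 1$, pick $Y_j \subseteq X_j$ of size $\lceil dx_j/(d-1)\rceil$; the hypothesis that $\mathcal{G}_r$ is not $d$-wise intersecting provides the ambient room in each $X_j$ to do so. Cover each $Y_j$ by $d$ subsets $B_{1,j},\ldots,B_{d,j}$ of size $x_j$ such that every element of $Y_j$ is excluded from at least one $B_{\ell,j}$; such a covering exists because $d(|Y_j| - x_j) \geq |Y_j|$ by the choice of $|Y_j|$. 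The sets $C_\ell := \bigcup_j B_{\ell,j}$ then satisfy $|C_\ell \cap X_j| = x_j$ for each $j$, $\bigcap_\ell C_\ell = \emptyset$, and $|\bigcup_\ell C_\ell| = M_2$.

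It remains to extend each $C_\ell$ to a $k$-set $A_\ell$ by adding $k - \sum_j x_j$ extra elements while keeping the union at $\max(M_1,M_2)$ and the intersection empty. When $M_2 \geq M_1$, the extras can be drawn from inside $\bigcup_j Y_j$: a direct count shows that the available ``avoidance capacity'' across the $A_\ell$ equals $(d-1)M_2 - d\sum_j x_j$, which is at least $d(k - \sum_j x_j)$ precisely when $M_2 \geq M_1$, so each added element can be placed so that it still fails to appear in at least one $A_\ell$. When $M_2 < M_1$, I would first enlarge the $Y_j$ (using ambient room in the $X_j$) so that $\sum_j |Y_j| = M_1$, and then run the same argument with $M_1$ in place of $M_2$. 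Either way, the resulting $A_\ell$ lie in $\mathcal{G}_r$, have empty common intersection, and satisfy $|\bigcup_\ell A_\ell| = \max(M_1, M_2) = s+1$.

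\emph{Expected obstacle.} Part 1 is a routine double-counting argument. The substantive work is the construction in Part 2, specifically the bookkeeping needed to ensure that (i) each added element creates no new all-$d$ coincidence, (ii) the quotas $|A_\ell \cap X_j| \geq x_j$ are preserved, and (iii) the union hits $s+1$ exactly. Concretely, the assignment reduces to the existence of a $0/1$-matrix of dimension $d \times (s+1)$ with prescribed row sums equal to $k$, column sums in $\{1,\ldots,d-1\}$, and per-block column totals at least $x_j$; existence should follow either by a direct greedy construction or by a Gale--Ryser-type feasibility argument. The hypothesis that $\mathcal{G}_r$ is not $d$-wise intersecting must be invoked at this step to confirm that each $X_j$ is large enough to host its chosen $Y_j$ (and any needed enlargement).
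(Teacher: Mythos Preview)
Your plan matches the paper's proof: Part~1 is identical, and for Part~2 the paper likewise splits on whether $M_2 \geq M_1$ or $M_1 > M_2$, builds the witnessing $d$-tuple block by block inside sets $X_i' \subseteq X_i$ of size $\lceil dx_i/(d-1)\rceil$, and in the second case pads with extra elements of $[n]$ (using Lemma~\ref{Min int size} to certify there is room). The only difference is cosmetic: where you defer the element-to-set assignment to a Gale--Ryser-type feasibility check, the paper carries it out by an explicit greedy procedure that adds each new element to $d-1$ of the currently smallest sets, keeping all set sizes within one of each other throughout.
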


\begin{proof}
Suppose that the family $\mathcal{G}_r$ is not $d$-wise intersecting,
and let $s$ be defined as above.
Let $A_1,\ldots,A_d \in \mathcal{G}_r$ be $d$ sets with empty intersection.
For each $i$, $A_j \cap X_i$ is a subset of
$(A_1 \cup\cdots\cup  A_d)\cap X_i$
of size at least~$x_i$.
Also,
\[
  (A_1 \cap X_i) \cap \cdots \cap (A_{d} \cap X_i) \subseteq A_1 \cap \cdots \cap A_d = \emptyset\,.
\]
Hence by Lemma \ref{Min int size},
\begin{equation}\label{eqn: lower bound in Xi}
  |\left(A_1 \cup\cdots\cup  A_d \right)\cap X_i| \geq \Bigl\lceil \frac{dx_i}{d-1}\Bigr\rceil\,.
\end{equation}
The sets $X_1,\ldots,X_r$ partition $[n]$,
so
\[
      | A_1\cup\cdots\cup  A_d|
  =   \sum_{i=1}^{r} |(A_1 \cup\cdots\cup  A_d) \cap X_i|
 \geq \sum_{i=1}^{r} \Bigl\lceil \frac{dx_i}{d-1}\Bigr\rceil\,.
\]
Therefore,
$\mathcal{G}_r$ is
$\Bigl(d, \sum_{i=1}^{r} \bigl\lceil \frac{dx_i}{d-1}\bigr\rceil-1 \Bigr)$-conditionally intersecting.
Also
by Lemma \ref{Min int size},
\[
  |A_1 \cup \cdots \cup A_d| \geq  \Bigl\lceil \frac{dk}{d-1}\Bigr\rceil\,.
\]
Thus,
$\mathcal{G}_r$ is  $\Bigl(d,\bigl\lceil \frac{dk}{d-1}\bigr\rceil -1 \Bigr)$-conditionally intersecting
and thus $(d,s)$-conditionally intersecting.

We now show that $\mathcal{G}_r$ is not $(d, s+1)$-conditionally intersecting
by finding $d$ sets in $\mathcal{G}_r$ with empty intersection and union of size $s+1$.
For each $i$,
define
\[
  y_i := \Bigl\lceil \frac{dx_i}{d-1}\Bigr\rceil \,.
\]
As $\mathcal{G}_r$ is not $d$-wise intersecting,
there are $d$ members $A_1,\ldots, A_d$ of $\mathcal{G}_r$ with empty intersection.
Then by (\ref{eqn: lower bound in Xi}),
$|X_{i}| \geq y_i$ for all $i$.
Choose $X_i' \subseteq X_{i}$ such that $|X_i'| = y_i$
and let $a_i$ be any integer satisfying $d x_i \leq a_i \leq  (d-1)y_i$.
We will now construct $d$ subsets $A_{i,1},\ldots,A_{i,d}\subseteq X_i'$
each of size $x_i$ or $x_i+1$ and satisfying
$A_{i,1} \cap \cdots  \cap A_{i,d} = \emptyset$
and
\[
  \sum_{j = 1}^{d}|A_{i,j}| = a_i\,.
\]
First, set $A_{i,1} = \cdots  = A_{i,d} = \emptyset$.
Add an arbitrary element $x\in X_i'$ to $d-1$ of the sets $A_{i,j}$.
Choose a new element $x' \in X_i'$
and add it to $d-1$ of the smallest of the sets $A_{i,j}$.
Continue to add new elements to $d-1$ of the smallest sets
until all but one element of $X_i'$ have been added.
Add the remaining element to $a_i - (d-1)(y_i-1)$
of the smallest of the sets $A_{i,j}$.
The resulting sets $A_{i,1},\ldots,A_{i,d}$ satisfy
 $A_{i,1} \cap \cdots  \cap A_{i,d} = \emptyset$
as no element was added to all of the sets $A_{i,j}$.
Also,
\[
    \sum_{j = 1}^{d}|A_{i,j}|
  = \sum_{x \in X_i^\prime}\bigl|\{\,j \,:\, x \in A_{i,j}\}\bigr|
  = (d-1)(y_i -1) + a_i -
    (d-1)(y_i -1)
  = a_i \,.
\]
Finally, the sets $A_{i,1},\ldots,A_{i,d}$ are of near-equal size
and as
\[
       d x_i
  \leq \sum_{j = 1}^{d}|A_{i,j}| \leq (d-1)y_i
    <  d (x_i+1)\,,
\]
each set $A_{i,j}$ has size $x_i$ or $x_{i+1}$.

Given an integer $a$ with $\sum_{i=1}^{r} d x_{i} \leq a \leq \sum_{i=1}^{r}(d-1)y_i$,
we construct new sets $A_{1},\ldots,A_{d}$ of near-equal size
that satisfy $A_{1} \cap \cdots \cap A_{d} =~\emptyset$,
$x_i \leq |A_j \cap X_i | \leq x_i+1$ for all $i$ and $j$,
and
\[
  \sum_{j = 1}^d |A_j| = a\,.
\]
Let $a_1,\ldots,a_r$ be integers such that
$d x_i \leq a_i \leq  (d-1)y_i$
and
$a_1 + \cdots + a_r = a$.
Construct $A_{i,1},\ldots,A_{i,d}$ as above for all $i$,
and set $A_j = A_{1,j}$ for all $j$.
Relabel the $j$ indices so that the indices $j$ for which $|A_{2,j}| = x_{2}+1$ are
also the indices $j$
for which $A_j$ are the smallest of the sets $A_1,\ldots, A_{d}$.
Now add the elements of $A_{2,j}$ to $A_j$ for all $j$; i.e.,
let $A_{j} = A_{1,j} \cup A_{2,j} $.
Again, relabel the sets so that $|A_{3,j}| = x_{3}+1$
for the indices $j$ for which the sets $A_{j}$ are
the smallest of the sets $A_1, \ldots, A_{d}$.
Continue to add $A_{i,j}$ and relabel in this way, for $i = 3,\ldots,r$ and all $j$.
We have thus constructed sets
\[
  A_j = \bigcup_{i=1}^{r} A_{i,j}
\]
for all $j$.
Since $A_{i,1} \cap \cdots  \cap A_{i,d} = \emptyset$ for all $i$,
it follows that $A_{1} \cap \cdots \cap A_{d} = \emptyset$.
Note that $x_i \leq |A_j \cap X_i| = |A_{i,j}| \leq x_i + 1$ for all $i$ and $j$,
and that, by construction,
the sets $A_j$ are near-equally sized
and that
\[
    \sum_{j = 1}^d |A_j|
  = \sum_{j = 1}^d \sum_{i = 1}^r |A_{i,j}|
  = \sum_{i = 1}^r \sum_{j = 1}^d |A_{i,j}|
  = \sum_{i = 1}^r a_i
  = a\,.
\]
We now wish to prove that, for some $a$,
the sets $A_1,\ldots,A_d$ constructed in the way described above are in $\mathcal{G}_r$
and that the union of the sets $A_1,\ldots,A_d$ has size $s+1$.
For this purpose, we consider two cases,
namely the case in which $\sum_{i=1}^{r}(d-1)y_i$ is greater than $dk$,
and the cases in which it is not.

\medskip

\noindent
{\sc Case I}:
Suppose that $\sum_{i=1}^{r}(d-1)y_i \geq dk$.
Then
\[
    \sum_{i=1}^{r}y_i
  = \max\biggl\lbrace \Bigl\lceil\frac{dk  }{d-1}\Bigr\rceil,
                      \sum_{i=1}^{r} y_i\biggr\rbrace
  = \max\biggl\lbrace \Bigl\lceil\frac{dk  }{d-1}\Bigr\rceil,
                      \sum_{i=1}^{r}\Bigl\lceil\frac{dx_i}{d-1}\Bigr\rceil \biggr\rbrace
  = s + 1\,.
\]
Also,
\[
  \sum_{i=1}^{r} dx_i
  \leq dk\leq
  \sum_{i=1}^{r}(d-1)y_i\,,
\]
so we were allowed to choose the value $a = dk$ in the above construction,
and thus obtain the sets $A_1, \ldots, A_d$ of near-equal size
satisfying
\[
  \sum_{i = 1}^{d}|A_{i}| = a = dk\,.
\]
Since $|A_j|\leq k$ for each $j$,
it follows that $|A_1| = \cdots = |A_d| = k$.
Since $|A_j \cap X_i|\geq x_i$ for all $i$ and $j$,
it follows that $A_1,\ldots, A_{d} \in \mathcal{G}_r$.
By construction,
\[
    |A_1 \cup \ldots \cup A_{d}|
  = \sum_{i=1}^r |X_i^\prime|
  = \sum_{i=1}^{r}y_i
  =  s+1\,.
\]
\noindent
{\sc Case II}:
Now suppose that $\sum_{i=1}^{r}(d-1)y_i < dk$.
Then
\[
    s
  = \max\biggl\lbrace \Bigl\lceil\frac{dk  }{d-1}\Bigr\rceil - 1,
        \sum_{i=1}^{r}\Bigl\lceil\frac{dx_i}{d-1}\Bigr\rceil - 1 \biggr\rbrace
  = \max\biggl\lbrace \Bigl\lceil\frac{dk  }{d-1}\Bigr\rceil - 1,
        \sum_{i=1}^{r} y_i - 1 \biggr\rbrace
  = \Bigl\lceil \frac{dk}{d-1}\Bigr\rceil-1\,.
\]
Also, we may assume that the value $a = \sum_{i=1}^{r}(d-1)y_i$ was used in the above construction,
so the resulting sets $A_1,\ldots, A_d$ satisfy
\[
  \sum_{i = 1}^{d}|A_{i}| = a = \sum_{i=1}^{r}(d-1)y_i\,.
\]
Since $a_i \leq (d-1)y_i$ for each $i$ and $a_1 + \cdots a_r = a = \sum_{i=1}^{r}(d-1)y_i$,
it follows that $a_i = (d-1)y_i$ for each $i$.
Hence in the construction,
each element in $X' := X_1^\prime \cup \cdots \cup X_{r}^\prime$
was added to exactly $d-1$ of the sets $A_1,\ldots, A_d$
or, in the case of the $r$ final elements,
to
\[
  a_i - (d - 1)(y_i - 1) = (d-1)y_i  - (d - 1)(y_i - 1) = d-1
\]
of the sets.
That is, each element of $X'$ lies in exactly $d-1$ of the sets $A_1,\ldots, A_d$.
Add an element of $[n]- X'$
to $d-1$ of the smallest sets $A_i$
and continue to add elements of $[n] - X'$
in this way until
\[
  dk \leq  \sum_{i = 1}^{d}|A_{i}| < dk + d-1\,.
\]
This is possible, for suppose by way of contradiction that it were not;
then every element of~$[n]$ would lie in exactly $d-1$ of the sets $A_1,\ldots, A_d$
and yet $\sum_{i = 1}^{d}|A_{i}| < dk$.
Then
\[
    (d-1)n
  = \sum_{j \in [n]} |\{A_i \,:\, j \in A_i \}|
  = \sum_{i = 1}^{d}|A_{i}|< dk \,.
\]
However, $\mathcal{G}_r$ is not $d$-wise
intersecting, so $n \geq \bigl\lceil \frac{dk}{d-1}\bigr\rceil$ by Lemma~\ref{Min int size},
a contradiction.

Now remove an element from a largest set $A_i$
and continue to do so until
\[
  \sum_{i = 1}^{d}|A_{i}| = dk\,.
\]
As in {\sc Case I},
the sets $A_1, \ldots, A_d$ have near-equal size and $|A_j|\leq k$ for each $j$,
so $|A_1| = \cdots = |A_d| = k$.
As $|A_j \cap X_i| \geq x_j$ for all $i$ and $j$,
it follows that $A_1, \ldots, A_d \in \mathcal{G}_r$.
The number of elements of
$[n]- X'$ added to the sets $A_1,\ldots, A_d $ is
\[
    \biggl\lceil \frac{dk - \sum_{i=1}^{r}(d-1)y_i}{d-1} \biggr\rceil
  = \Bigl\lceil \frac{dk}{d-1}\Bigr\rceil -  \sum_{i=1}^{r}y_i
\]
and the number of elements of $X' = X_1^\prime \cup \cdots \cup X_{r}^\prime$
added to the sets $A_1, \ldots, A_d $ is $\sum_{i=1}^{r}y_i$,
so
\[
    |A_1 \cup \cdots \cup A_d|
  = \Bigl\lceil \frac{dk}{d-1}\Bigr\rceil
  - \sum_{i=1}^{r}\Bigl\lceil \frac{dk}{d-1}\Bigr\rceil
  + \sum_{i=1}^{r}y_i
  = \Bigl\lceil \frac{dk}{d-1}\Bigr\rceil = s+1\,.
\]
In both {\sc Case I} and {\sc Case II},
we have $d$ sets $A_1,\ldots,A_d\in\mathcal{G}_r$
with union size $s+1$ and yet empty mutual intersection.
Hence, the sets $A_1,\ldots,A_{d}$ lie in $\mathcal{G}_r$ and are not $(d,s+1)$-conditionally intersecting,
so neither is $\mathcal{G}_r$.
\end{proof}

\begin{example}\label{exa:maxfamily_Fi2}{\rm
By Theorem~\ref{Thm Counter}, the family $\mathcal{F}_j$ from Example~\ref{exa:maxfamily_Fi} is
$\bigl(d,\bigl\lceil\frac{dk}{d-1}\bigr\rceil -1 \bigr)$-conditionally intersecting
for all $d \geq 3$
but is not
$\bigl(d,\bigl\lceil\frac{dk}{d-1}\bigr\rceil    \bigr)$-conditionally intersecting.}
\end{example}

\section{On \texorpdfstring{$(2,s)$}{}-conditionally intersecting families of \texorpdfstring{$2^{[n]}$}{}: Part I}
\label{sec:2si}
$ $\\
\noindent
Erd\H{o}s et al.~\cite{MR0140419} proved the following bound
on the size of each intersecting family $\mathcal{F} \subseteq 2^{[n]}$.

\begin{lemma}{\rm\cite{MR0140419}}\label{Lem int}
If $\mathcal{F} \subseteq 2^{[n]}$ is intersecting, then
\[
  |\mathcal{F}| \leq 2^{n-1}\,.
\]
\end{lemma}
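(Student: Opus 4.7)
The plan is to use the classical complementation pairing argument. I would partition the power set $2^{[n]}$ into the $2^{n-1}$ unordered pairs of the form $\{A,\, [n]\setminus A\}$ as $A$ ranges over all subsets of $[n]$. These pairs are disjoint and exhaust $2^{[n]}$, so there are exactly $2^{n-1}$ of them.

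Next I would observe that any set $A$ and its complement $[n]\setminus A$ satisfy $A\cap([n]\setminus A)=\emptyset$. Therefore, if $\mathcal{F}$ is intersecting, then for each pair $\{A,\, [n]\setminus A\}$ the family $\mathcal{F}$ can contain at most one of the two sets; otherwise $\mathcal{F}$ would contain two disjoint members, contradicting the intersecting hypothesis.

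Summing over the $2^{n-1}$ pairs then yields $|\mathcal{F}|\leq 2^{n-1}$, as desired. There is no real obstacle here: the only mild subtlety is confirming that the pairs genuinely partition $2^{[n]}$ (which follows because complementation on $[n]$ is an involution without fixed points, since no set equals its own complement), and that the bound is attained, for instance, by the star $\{A\subseteq[n]\colon 1\in A\}$, which shows the inequality is tight.
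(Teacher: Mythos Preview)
Your argument is correct and is the standard proof of this classical fact. Note, however, that the paper does not actually supply its own proof of this lemma: it is stated with a citation to Erd\H{o}s, Ko and Rado~\cite{MR0140419} and then used as a black box. So there is nothing in the paper to compare your argument against; your complementation pairing is precisely the usual proof one would give here, and it is complete as written.
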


A star meets the bound above
but is not the unique maximum family.
While weak asymptotic results are known (see~\cite{MR3066347}),
it is a difficult challenge to classify the case of equality for general~$n$.
It is also difficult to count the number of families attaining the bound of Lemma~\ref{Lem int},
even for small~$n$.
The highest value of $n$ for which such counting has presently been achieved is~9;
see~\cite{MR3066347}.

The aim of this section and that of Section~\ref{sec:2sii} is to extend Lemma~\ref{Lem int}.
To this aim,
we extend the $(2,s)$-conditionally intersecting condition on families
$\mathcal{F} \subseteq \binom{[n]}{k}$ to families $\mathcal{F} \subseteq 2^{[n]}$.
In particular,
a family $\mathcal{F} \subseteq 2^{[n]}$ is {\em $(2,s)$-conditionally intersecting}
if
\[
   A \cap B \neq \emptyset
   \quad\textrm{whenever}\quad
  |A \cup B| \leq s
   \quad\textrm{for each }
   A, B \in \mathcal{F} \,.
\]
Theorem~\ref{thm:mine} below provides tight upper bounds on the sizes
of $(2,s)$-conditionally intersecting families $\mathcal{F} \subseteq 2^{[n]}$,
and describing precisely the families that attain these bounds.
This theorem extends Lemma~\ref{Lem int};
the latter is obtained from the former by setting $s = n$.

\begin{theorem}\label{thm:mine}
Suppose that $\mathcal{F}\subseteq 2^{[n]}$ is $(2,s)$-conditionally intersecting for some $s\leq n$.
If $s=2k$, then
\[
  |\mathcal{F}|\leq \binom{n-1}{k-1}+\sum_{i=k+1}^{n}\binom{n}{i}
\]
and if $s < n$ also holds, then equality holds above if and only if, up to isomorphism,
\[
  \mathcal{F} = \Bigl\{ F\in \binom{[n]}{k} \,:\, 1\in F\Bigr\}
       \cup
       \binom{[n]}{>k}\,.
\]
If $s=2k-1$, then
\[
  |\mathcal{F}|\leq \sum_{i=k}^{n}\binom{n}{i}
\]
and if $s < n$ also holds, then equality holds above if and only if $\mathcal{F} = \binom{[n]}{\geq k}$.
\end{theorem}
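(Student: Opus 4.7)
The key reformulation is that $\mathcal{F}$ is $(2,s)$-conditionally intersecting if and only if every pair of distinct disjoint $A,B\in\mathcal{F}$ satisfies $|A|+|B|>s$, since $|A\cup B|=|A|+|B|$ when $A\cap B=\emptyset$. For $s=2k$, my plan is to decompose $\mathcal{F}$ by cardinality. The subfamily $\mathcal{F}\cap\binom{[n]}{\leq k}$ is intersecting, since any two such sets have union of size at most $2k=s$; in particular $\mathcal{F}_{=k}:=\mathcal{F}\cap\binom{[n]}{k}$ is intersecting, so the Erd\H{o}s-Ko-Rado Theorem (applicable because $s=2k\leq n$) gives $|\mathcal{F}_{=k}|\leq\binom{n-1}{k-1}$. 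Trivially $|\mathcal{F}\cap\binom{[n]}{>k}|\leq\sum_{i=k+1}^{n}\binom{n}{i}$, and summing yields the claimed bound provided $\mathcal{F}$ contains no sets of size below $k$.

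To handle sets of size below $k$, I apply an upward compression: for each $A\in\mathcal{F}$ with $|A|<k$, choose some $x\in[n]\setminus A$ with $A\cup\{x\}\notin\mathcal{F}$ and replace $A$ by $A':=A\cup\{x\}$. This preserves the $(2,s)$-condition, because if some $B\in\mathcal{F}\setminus\{A\}$ had $A'\cap B=\emptyset$ and $|A'\cup B|\leq s$, then $A\subseteq A'$ would give $A\cap B=\emptyset$ and $|A\cup B|\leq|A'\cup B|\leq s$, contradicting the original condition on $\mathcal{F}$. Iterating yields a family of equal size with no sets of size below $k$, and the preceding bounds close the $s=2k$ case. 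The $s=2k-1$ case is analogous with threshold $k-1$: $\mathcal{F}\cap\binom{[n]}{\leq k-1}$ is intersecting (any two such sets have union at most $2k-2<s$), and any $A\in\mathcal{F}$ of size at most $k-1$ must intersect every $B\in\mathcal{F}_{=k}$ (since any such disjoint pair would satisfy $|A\cup B|\leq 2k-1=s$, violating the condition). The same compression reduces to $\mathcal{F}\subseteq\binom{[n]}{\geq k}$, which trivially satisfies $|\mathcal{F}|\leq\sum_{i=k}^{n}\binom{n}{i}$. For the equality statements (valid for $s<n$), the uniqueness clause of the Erd\H{o}s-Ko-Rado Theorem identifies $\mathcal{F}_{=k}$ as a star in the $s=2k$ case, while in the $s=2k-1$ case a direct count shows that including any set of size below $k$ strictly reduces the total by forcing too many $k$-sets to be excluded, so $\mathcal{F}=\binom{[n]}{\geq k}$ exactly.

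The main obstacle will be the step in the compression where no valid extension exists, i.e., when all $n-|A|$ one-element supersets of $A$ already lie in $\mathcal{F}$. In that situation the compression cannot proceed directly and $|\mathcal{F}|$ must be bounded explicitly, exploiting the structural constraints forced by the simultaneous presence of $A$ and its supersets: Erd\H{o}s-Ko-Rado uniqueness forces $\mathcal{F}_{=k}$ to be contained in a star through some element of $A$, which in turn constrains $\mathcal{F}_{<k}$ via cross-intersection with $\mathcal{F}_{=k}$, and a layer-by-layer sum of the resulting maxima is strictly below the target bound.
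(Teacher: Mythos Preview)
Your upward-compression idea is natural and the verification that replacing $A$ by a proper superset preserves the $(2,s)$-condition is correct. The gap is in your handling of the stuck case, which you yourself flag as the main obstacle. Your claim that ``Erd\H{o}s--Ko--Rado uniqueness forces $\mathcal{F}_{=k}$ to be contained in a star through some element of $A$'' is not justified: EKR uniqueness only applies when $|\mathcal{F}_{=k}|$ actually attains $\binom{n-1}{k-1}$, and the stuck condition (all one-element supersets of $A$ lie in $\mathcal{F}$) does not force this unless $|A|=1$. For $|A|\geq 2$ one can easily have $\mathcal{F}_{=k}$ intersecting, containing all $k$-supersets of $A$, yet not contained in any star through a point of $A$. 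Worse, the ``layer-by-layer sum of the resulting maxima'' can exceed the target: take $n=7$, $k=3$, $s=6$, $A=\{1,2\}$; bounding each $\mathcal{F}_j$ only by ``intersects $A$'' together with EKR on $\mathcal{F}_{=k}$ gives a total of $6+15+30+21+7+1=80>79$. To push the count below $79$ you must exploit cross-intersection constraints between \emph{all} small sets and the layers $\mathcal{F}_{k+1},\ldots,\mathcal{F}_{s-r}$ simultaneously, and you have not indicated how to do this in general.

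The paper sidesteps the obstacle entirely with a different replacement map. For each $r<k$ it observes that $(\mathcal{F}_r)^C\subseteq\binom{[n]}{n-r}$ is $(n-2r)$-intersecting, so Katona's shadow theorem gives $|\sigma_{s-r}((\mathcal{F}_r)^C)|\geq|\mathcal{F}_r|$. Every set $B\in\sigma_{s-r}((\mathcal{F}_r)^C)$ is disjoint from some $A\in\mathcal{F}_r$ with $|A\cup B|=s$, so $B\notin\mathcal{F}$; moreover these shadow sets have size $s-r>k$, so adding them and deleting $\mathcal{F}_{<k}$ yields a $(2,s)$-conditionally intersecting family $\mathcal{F}'$ with $|\mathcal{F}'|\geq|\mathcal{F}|$ and no small sets, to which the easy case applies. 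The equality characterisation then falls out of the equality case of Katona's theorem. This is global rather than one-set-at-a-time, and it never gets stuck.
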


\noindent
Here, $\binom{[n]}{>k}$ and $\binom{[n]}{\geq k}$
denote the respective families
\[
  \binom{[n]}{>k} = \{F\in 2^{[n]} \,:\, |F|>k\}
  \quad\text{and}\quad
  \binom{[n]}{\geq k} = \{F\in 2^{[n]} \,:\, |F|\geq k\}\,.
\]
Note that Keevash and Mubayi~\cite{MR2676834}
also presented a conditionally intersecting definition with the same terminology as ours;
their definition is however different from that given here,
and their results are only tangentially related to the results of this section.

In order to prove Theorem~\ref{thm:mine},
we require some preliminary definitions and results.
For a family $\mathcal{F} \subseteq \binom{[n]}{k}$,
define
\[
  \mathcal{F}^C = \{ F^C      \,:\,  F \in \mathcal{F} \}\qquad\text{and}\qquad
  \mathcal{F}_r = \{ F \in \mathcal{F} \,:\, |F| = r \}\,.
\]
Also, for each $\ell \leq k$,
define the {\em $\ell$-shadow} of $\mathcal{F}$ to be the set
\[
  \sigma_\ell(\mathcal{F})
  = \bigl\{ F \in \binom{[n]}{\ell} \,:\,
            F \subseteq A \textrm{ for some } A \in \mathcal{F} \bigr\} \,.
\]

The following theorem by Katona~\cite{MR0168468}
gives a lower bound for the size of a shadow under certain conditions.
\begin{theorem}\label{Katona}{\rm\cite{MR0168468}}
If $0 \leq k-t \leq \ell \leq k$
and $\mathcal{F} \subseteq \binom{[n]}{k}$ is $t$-intersecting,
i.e., $|A\cap B| \geq t$ for all $A,B \in \mathcal{F}$,
then
\[
  |\sigma_\ell(\mathcal{F})| \geq \frac{\binom{2k-t}{\ell}}{\binom{2k-t}{k}}|\mathcal{F}|\,.
\]
Furthermore, equality holds if and only if $n = 2k-t$, $ \mathcal{F} = \binom{[n]}{k}$
and $\ell\in\{k, k-t\}$.
\end{theorem}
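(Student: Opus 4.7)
The plan is to apply Katona's classical cyclic-permutation method. For each cyclic ordering $\pi$ of $[n]$, call a subset an \emph{arc} of $\pi$ if its elements appear consecutively along the cycle, and let $a_k(\pi)$ and $a_\ell(\pi)$ denote the numbers of $k$-arcs of $\pi$ in $\mathcal{F}$ and $\ell$-arcs of $\pi$ in $\sigma_\ell(\mathcal{F})$, respectively. Since every $k$-subset of $[n]$ appears as a $k$-arc in exactly $k!(n-k)!$ of the $(n-1)!$ cyclic orderings, symmetry yields
\[
\sum_\pi a_k(\pi)=|\mathcal{F}|\cdot k!(n-k)!\qquad\text{and}\qquad \sum_\pi a_\ell(\pi)=|\sigma_\ell(\mathcal{F})|\cdot\ell!(n-\ell)!\,.
\]
If I can establish a pointwise inequality between $a_\ell(\pi)$ and $a_k(\pi)$ on each cycle with the correct constant, summation and cancellation will produce the stated lower bound on $|\sigma_\ell(\mathcal{F})|$.

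To establish this local inequality, I would analyse the structure of the $k$-arcs of $\pi$ lying in $\mathcal{F}$. The $t$-intersecting hypothesis forces any two such arcs to share at least $t$ elements: when $n\leq 2k-t$ this is automatic from the arc sizes, while when $n>2k-t$ it restricts their starting positions to lie in a common window whose length is controlled by $k$ and $t$. In either regime, every $\ell$-sub-arc of a $k$-arc in $\mathcal{F}$ belongs to $\sigma_\ell(\mathcal{F})$, so an injective enumeration of these $\ell$-sub-arcs along the cycle gives a lower bound for $a_\ell(\pi)$ in terms of $a_k(\pi)$. The factor $\binom{2k-t}{\ell}/\binom{2k-t}{k}$ emerges after converting arc-counts through the two double-count identities above.

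I anticipate the main obstacle to be the case $n>2k-t$, since then the naive local ratio $a_\ell(\pi)/a_k(\pi)$ on a single cycle is too small to deliver the desired bound on its own. The standard workaround is to couple the cycle method with Frankl-style shifting: shifting preserves $|\mathcal{F}|$ by Proposition~\ref{prop:shift_prop}(ii) and the $t$-intersecting property by Proposition~\ref{prop:shift_and_intersectingness}(iii), and it does not increase the $\ell$-shadow by a standard shifting lemma. Exhaustive shifting reduces the problem to a shifted $t$-intersecting family whose members are concentrated on the first $2k-t$ elements, after which the cycle method on a cycle of length $2k-t$ gives exactly the ratio $\binom{2k-t}{\ell}/\binom{2k-t}{k}$.

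For the equality case, I would trace equality through both the shifting reduction and the local cycle inequality. Equality throughout forces $n=2k-t$, so that shifting is vacuous, and $\mathcal{F}=\binom{[n]}{k}$, so that every cycle is saturated by $k$-arcs of $\mathcal{F}$. The further restriction $\ell\in\{k-t,k\}$ pinpoints the boundary values of $\ell$ at which the $\ell$-sub-arc count aligns with $a_k(\pi)$ with no slack in the local inequality. Conversely, when $n=2k-t$ and $\mathcal{F}=\binom{[n]}{k}$, a direct computation gives $|\sigma_\ell(\mathcal{F})|=\binom{2k-t}{\ell}$, confirming equality in the theorem.
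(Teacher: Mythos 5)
The paper offers no proof of Theorem~\ref{Katona}: it is quoted from Katona~\cite{MR0168468} and used as a black box in the proofs of Theorems~\ref{thm:mine} and~\ref{thm:Family-size-comp}, so I can only assess your proposal on its own terms. There is a genuine gap, and it sits at the load-bearing step. Your cycle setup and the two double-counting identities are correct, and you rightly observe that the pointwise inequality $a_\ell(\pi)\geq c\,a_k(\pi)$ would need the constant $c=\binom{n}{k}\binom{2k-t}{\ell}\big/\bigl(\binom{n}{\ell}\binom{2k-t}{k}\bigr)$, which is impossible for large $n$ since $c\to\infty$ while $a_\ell(\pi)\leq n$. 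But the repair you propose --- that exhaustive shifting yields a stable $t$-intersecting family ``whose members are concentrated on the first $2k-t$ elements'' --- is false. The star $\{A\in\binom{[n]}{k}\,:\,[t]\subseteq A\}$ is stable and $t$-intersecting yet covers all of $[n]$; more decisively, any reduction onto the ground set $[2k-t]$ that preserves $|\mathcal{F}|$ would force $|\mathcal{F}|\leq\binom{2k-t}{k}$, which that star violates for large $n$ because $|\mathcal{F}|=\binom{n-t}{k-t}$. No such reduction exists, so the case $n>2k-t$ remains unproved, and the equality characterisation (which you route through the same reduction) falls with it.

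The gap is all the more serious because the regime your argument does cover is the trivial one. For $n\leq 2k-t$ every pair of $k$-sets meets in at least $2k-n\geq t$ points, and the elementary double count of pairs $(B,A)$ with $B\subseteq A$, $|B|=\ell$, $A\in\mathcal{F}$ already gives $|\sigma_\ell(\mathcal{F})|\geq \frac{\binom{n}{\ell}}{\binom{n}{k}}|\mathcal{F}|\geq \frac{\binom{2k-t}{\ell}}{\binom{2k-t}{k}}|\mathcal{F}|$, since $m\mapsto\binom{m}{\ell}/\binom{m}{k}$ is decreasing; no cyclic orderings are needed. All of the theorem's content lies in $n>2k-t$, where the $t$-intersecting hypothesis is a real constraint. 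The known short proofs do begin with shifting (which, as you note, preserves $t$-intersection as in Proposition~\ref{prop:shift_and_intersectingness}(iii) and does not increase $|\sigma_\ell|$), but they then proceed by induction on $n$ through the decomposition into $\mathcal{F}(\bar n)$ and $\mathcal{F}(n)$ --- the same device used in the proof of Theorem~\ref{thm:Main} --- exploiting that for a stable $t$-intersecting family the link $\mathcal{F}(n)$ satisfies a stronger intersection condition; they do not pass to a shorter cycle. To make your proposal work you would need to replace the false concentration claim with an induction of this kind.
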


\noindent
We are now ready to prove Theorem~\ref{thm:mine}
and will follow an approach similar to that in~\cite{MR0168468}.

\begin{proof}[Proof of Theorem \ref{thm:mine}]
First suppose that $\mathcal{F}$ contains no set of size less than $k =~\lceil\frac{s}{2}\rceil$.
Suppose that $s = 2k$ and define
\[
    \mathcal{F}^+
  = \mathcal{F}   \cup \binom{[n]}{>k}
  = \mathcal{F}_k \cup \binom{[n]}{>k}\,.
\]
If $A,B\in\mathcal{F}_k$ are distinct sets in $\mathcal{F}$,
then $|A\cup B|\leq 2k = s$,
so $A\cap B\neq\emptyset$
since $\mathcal{F}$ is $(2,s)$-conditionally intersecting.
Hence, $\mathcal{F}_k$ is an intersecting family,
so by the Erd\H{o}s-Ko-Rado~Theorem~\cite{MR0140419},
\[
       |\mathcal{F}|
  \leq |\mathcal{F}^+|
    =  |\mathcal{F}_k| + \biggl|\binom{[n]}{>k}\biggr|
  \leq \binom{n-1}{k-1}+\sum_{i=k+1}^{n}\binom{n}{i}\,,
\]
and the unique maximum family (up to isomorphism) is
\[
  \mathcal{F}   =  \Bigl\{F\in \binom{[n]}{k} \,:\, 1\in F\Bigr\} \cup \binom{[n]}{>k}\,.
\]
If $s = 2k-1$,
then $\mathcal{F}\subseteq \binom{[n]}{\geq k}$, so
\[
  |\mathcal{F}| \leq \sum_{i=k}^{n}\binom{n}{i}\,,
\]
and the unique maximum family is $\binom{[n]}{\geq k}$.

Now suppose that $\mathcal{F}_{r'}\neq \emptyset$ for some positive integer $r'<k$
and define
\[
  \mathcal{F}' = \biggl(\mathcal{F} \cup \bigcup_{r=1}^{k-1} \sigma_{s-r}\bigl((\mathcal{F}_r)^C\bigr)\biggr) - \bigcup_{r=1}^{k-1} \mathcal{F}_r\,.
\]
We first show that $|\mathcal{F}| \leq |\mathcal{F}'|$.
If $A\in \sigma_{s-r}\bigl((\mathcal{F}_r)^C\bigr)$,
then
$A\subseteq F^C$ for some $F\in\mathcal{F}_r$,
so
\[
  F\cap A = \emptyset \quad \textrm{and} \quad |A\cup F| = |A| + |F|= (s-r) + r = s\,.
\]
Hence, $A\notin\mathcal{F}$ since $F\in\mathcal{F}$ and $\mathcal{F}$ is $(2,s)$-conditionally intersecting;
therefore,
$\mathcal{F}\,\cap \, \sigma_{s-r}\bigl((\mathcal{F}_r)^C\bigr) = \emptyset$.
For any $A,B \in (\mathcal{F}_r)^C$,
\[
  |A \cap B| = |A|+|B|-|A \cup B| \geq 2(n-r) - n = n-2r\,,
\]
so $(\mathcal{F}_r)^C$ is $(n-2r)$-intersecting.
As $s-r \geq r = (n-r)-(n-2r)$,
Theorem~\ref{Katona} implies that
\[
       |\sigma_{s-r}\bigl((\mathcal{F}_r)^C\bigr)|
  \geq \frac{\binom{n}{s-r}}{\binom{n}{n-r}}|(\mathcal{F}_r)^C|
   =   \frac{\binom{n}{s-r}}{\binom{n}{n-r}} |\mathcal{F}_r|\,.
\]
Now, $s-r\geq r$, so if $s-r\leq \frac{n}{2}$,
then $\binom{n}{s-r}\geq \binom{n}{r} = \binom{n}{n-r}$.
Otherwise, $\frac{n}{2}\leq s-r\leq n-r$;
also in this case,
$\binom{n}{s-r} \geq \binom{n}{n-r}$.
Hence, $|\sigma_{s-r}\bigl((\mathcal{F}_r)^C\bigr)| \geq |\mathcal{F}_r|$,
so
\[
     |\mathcal{F}'|
   = |\mathcal{F}| + \sum_{r=1}^{k-1} \bigl|\sigma_{s-r}\bigl((\mathcal{F}_r)^C\bigr)\bigr| - \sum_{r=1}^{k-1} |\mathcal{F}_r|
   =  |\mathcal{F}| + \sum_{r=1}^{k-1} \bigl(\bigl|\sigma_{s-r}\bigl((\mathcal{F}_r)^C\bigr)\bigr| - |\mathcal{F}_r|\bigr)
 \geq |\mathcal{F}|\,.
\]
Next, we prove that $\mathcal{F}'$ is $(2,s)$-conditionally intersecting.
Suppose that $|A\cup B| \leq s$ for $A,B\in\mathcal{F}'$.
If $A,B\in\mathcal{F}$, then $A\cap B\neq \emptyset$ since $\mathcal{F}$ is $(2,s)$-conditionally intersecting;
otherwise, either $A$ or $B$ is an element of $\mathcal{F}'-\mathcal{F} = \bigcup_{r=1}^{k-1} \sigma_{s-r}\bigl((\mathcal{F}_r)^C\bigr)$,
without loss of generality $A \in \sigma_{s-r}\bigl((\mathcal{F}_r)^C\bigr)$.
Then
\[
       |A\cap B|
    =  |A| + |B| - |A\cup B|
  \geq (s-r) +  k  - s
    = k-r
  > 0\,,
\]
so $A\cap B\neq \emptyset$.
We conclude that $\mathcal{F}'$ is $(2,s)$-conditionally intersecting.

Since $\mathcal{F}'$ contains no set of size smaller than $k$,
the first part of the proof implies that
\[
  |\mathcal{F}|\leq|\mathcal{F}'| \leq \sum_{i=k}^n\binom{n}{i}
\]
when $s=2k-1$ and that, when $s=2k$,
\[
  |\mathcal{F}|\leq|\mathcal{F}'| \leq \binom{n-1}{k-1}+\sum_{i=k+1}^{n}\binom{n}{i}\,.
\]
Finally, assume that bounds above met with $s<n$ and that $\mathcal{F}_r\neq\emptyset$ for some $r < k$.
By the proof above,
it follows that
\[
    |\sigma_{s-r}\bigl((\mathcal{F}_r)^C\bigr)|
  = \frac{\binom{n}{s-r}}{\binom{n}{n-r}}\bigl|(\mathcal{F}_r)^C\bigr|\,,
\]
so Theorem~\ref{Katona} implies that
$\mathcal{F}_r^C = \binom{[n]}{n-r}$ and so $\mathcal{F}_r = \binom{[n]}{r}$.
However, $n > 2r$, so $\mathcal{F}_r = \binom{[n]}{r}$ is not intersecting,
so there are disjoint sets $A,B\in\mathcal{F}$ with $|A \cup B | = 2r \leq s$,
contradicting that $\mathcal{F}$ is $(2,s)$-conditionally intersecting.
Hence when $s<n$ and $\mathcal{F}$ contains sets of size less than~$k$,
$|\mathcal{F}|$ cannot reach its upper bound.
This concludes the proof.
\end{proof}

\section{On \texorpdfstring{$(2,s)$}{}-conditionally intersecting families of \texorpdfstring{$2^{[n]}$}{}: Part II}
\label{sec:2sii}
$ $\\
\noindent
In the previous section, we extended Lemma~\ref{Lem int} by
presenting and proving the more general Theorem~\ref{thm:mine}.
In this final section, we will extend this line of investigation further,
by expanding the focus on $2^{[n]}$ to the more general set of families
\[
  \binom{[n]}{\leq u} = \bigl\{ X\subseteq [n] \,:\, |X|\leq u\}
\]
for certain values of $u$,
as follows.

\begin{theorem}\label{thm:mine2}
Let $\mathcal{F} \subseteq \binom{[n]}{\leq u}$ be $(2,s)$-conditionally intersecting for some $s\leq n$.
\begin{itemize}
  \item[(i)] If $u \geq s - 1$ and $s = 2k$, then
  \[
    |\mathcal{F}|\leq \binom{n-1}{k-1}+\sum_{i=k+1}^u\binom{n}{i}
  \]
  and if $s < n$ also holds, then equality holds if and only if, up to isomorphism,
  \begin{equation}\label{max form one}
    \mathcal{F} = \{ F\in 2^{[n]} \,:\, k<|F|\leq u \} \cup
      \Bigl\{ F\in \binom{[n]}{k} \,:\, 1\in F\Bigr\}\,.
    \end{equation}
  \item[(ii)] If $u \geq s - 1$ and $s = 2k - 1$, then
    \[
      |\mathcal{F}|\leq \sum_{i=k}^u \binom{n}{i}
    \]
    and if $s < n$ also holds, then equality holds above if and only if
    \begin{equation}\label{max form two}
      \mathcal{F} = \{ F\in 2^{[n]} \,:\, k\leq |F|\leq u \} \,.
    \end{equation}
  \item[(iii)] If $u \leq k =\bigl\lfloor\frac{s}{2}\bigr\rfloor$,
    then
    \[
      |\mathcal{F}| \leq \sum_{r=1}^{u}\binom{n-1}{r-1}
    \]
    and equality holds if and only if, up to isomorphism,
    \begin{equation}\label{max form three}
      \mathcal{F} = \{ F\in 2^{[n]} \,:\, k\leq |F|\leq u \text{ and } 1\in F\} \,.
    \end{equation}
  \end{itemize}
\end{theorem}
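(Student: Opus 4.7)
The plan is to handle parts (i) and (ii) by adapting the proof of Theorem~\ref{thm:mine}, and to treat part (iii) by a level-by-level application of the Erd\H{o}s-Ko-Rado Theorem. The critical new observation for (i) and (ii) is that the hypothesis $u \geq s-1$ ensures every shadow $\sigma_{s-r}((\mathcal{F}_r)^C)$ appearing in the proof of Theorem~\ref{thm:mine} consists of sets of size $s-r \leq s-1 \leq u$ and therefore remains inside $\binom{[n]}{\leq u}$.

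For (i) and (ii), I would first dispose of the case in which $\mathcal{F}_r = \emptyset$ for every $r < k$: then $\mathcal{F} \subseteq \mathcal{F}_k \cup \bigcup_{i=k+1}^{u}\binom{[n]}{i}$, and the Erd\H{o}s-Ko-Rado Theorem bounds $|\mathcal{F}_k|$ by $\binom{n-1}{k-1}$ when $s=2k$ (since any two $k$-sets in $\mathcal{F}_k$ have union of size at most $2k = s$), while for $s=2k-1$ the $(2,s)$ condition on $\mathcal{F}_k$ is automatic and the trivial bound $\binom{n}{k}$ applies; adding the unconstrained higher levels gives the stated bounds. In general, I would define
\[
  \mathcal{F}' = \Bigl(\mathcal{F} \cup \bigcup_{r=1}^{k-1} \sigma_{s-r}((\mathcal{F}_r)^C)\Bigr) \setminus \bigcup_{r=1}^{k-1} \mathcal{F}_r \,,
\]
apply Katona's Theorem~\ref{Katona} to the $(n-2r)$-intersecting family $(\mathcal{F}_r)^C \subseteq \binom{[n]}{n-r}$ to deduce $|\mathcal{F}'| \geq |\mathcal{F}|$, and verify that $\mathcal{F}'$ is $(2,s)$-conditionally intersecting; these steps transfer verbatim from the proof of Theorem~\ref{thm:mine}, now legitimately producing a family inside $\binom{[n]}{\leq u}$ to which the first case applies. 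For the equality characterisation (assuming $s < n$), I would again follow Theorem~\ref{thm:mine}: Katona equality together with $n > 2r$ would force $\mathcal{F}_r = \binom{[n]}{r}$ for some $r < k$ and hence produce two disjoint $r$-sets with union $2r \leq s$, contradicting the $(2,s)$-conditionally intersecting property; thus $\mathcal{F}_r = \emptyset$ for $r < k$, and Erd\H{o}s-Ko-Rado uniqueness (valid since $s < n$ yields $n > 2k$) pins $\mathcal{F}_k$ down to a star, producing the families~(\ref{max form one}) and~(\ref{max form two}).

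For part (iii), the hypothesis $u \leq k = \lfloor s/2 \rfloor$ guarantees that any $A, B \in \mathcal{F}$ satisfy $|A \cup B| \leq 2u \leq s$, so the $(2,s)$-intersecting property immediately implies that $\mathcal{F}$ is intersecting. Splitting $\mathcal{F}$ by cardinality, each $\mathcal{F}_r$ is intersecting with $n \geq s \geq 2u \geq 2r$, so the Erd\H{o}s-Ko-Rado Theorem gives $|\mathcal{F}_r| \leq \binom{n-1}{r-1}$, and summing over $r = 1,\ldots,u$ yields the bound. For equality, each $|\mathcal{F}_r|$ must attain $\binom{n-1}{r-1}$; in particular $|\mathcal{F}_1| = 1$, so $\mathcal{F}_1 = \{\{x\}\}$ for some $x \in [n]$, and the $(2,s)$ condition (which applies since $1 + u \leq s$) then forces $x \in F$ for every $F \in \mathcal{F}$, producing the star family~(\ref{max form three}).

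The main obstacle I foresee is the boundary case $n = 2u$ in part (iii), where Erd\H{o}s-Ko-Rado uniqueness fails at the top level $\mathcal{F}_u$ (there exist non-star intersecting $u$-uniform families of maximum size). This is circumvented by the forced singleton $\mathcal{F}_1 = \{\{x\}\}$ extracted from the overall equality, which via the $(2,s)$ condition propagates a common centre $x$ through all levels, restoring the uniqueness of the extremal family up to isomorphism.
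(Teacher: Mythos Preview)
Your proposal is correct and follows essentially the same approach as the paper. For parts (i) and (ii) the paper simply remarks that the result follows from the proof of Theorem~\ref{thm:mine} together with the observation that sets of size at least $s$ may be freely added or removed without affecting the $(2,s)$-conditionally intersecting property; your observation that $s-r \leq s-1 \leq u$ keeps each shadow $\sigma_{s-r}\bigl((\mathcal{F}_r)^C\bigr)$ inside $\binom{[n]}{\leq u}$ is the concrete mechanism behind this remark. For part (iii) your argument coincides with the paper's level-by-level application of Erd\H{o}s--Ko--Rado, and your explicit handling of the boundary case $n=2u$ (where Erd\H{o}s--Ko--Rado uniqueness fails at the top level but the forced singleton $\mathcal{F}_1=\{\{x\}\}$ still propagates a common centre) is a point the paper passes over.
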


\begin{proof}
The first two parts of the theorem follow easily from the proof of Theorem~\ref{thm:mine}
and the observation that one may freely add or remove sets of size at least~$s$
without violating the $(2,s)$-conditionally intersecting condition.
Suppose therefore that $u \leq k =\bigl\lfloor \frac{s}{2}\bigr\rfloor$.
Then $\mathcal{F}$ is intersecting;
in particular, the subfamilies
\[
  \mathcal{F}_r = \{ F \in \mathcal{F} \,:\, |F| = r \}
\]
are intersecting (so $|\mathcal{F}_1|\leq 1$).
By the Erd\H{o}s-Ko-Rado~Theorem~\cite{MR0140419}, each family $\mathcal{F}_r$ has
at most $\binom{n-1}{r-1}$ members,
and this bound is achieved if and only if $\mathcal{F}_r$ is a star.
Hence, $|\mathcal{F}|\leq\sum_{r=1}^u\binom{n-1}{r-1}$,
and this bound is achieved if and only if $\mathcal{F}$ is the union of stars $\mathcal{F}_1,\ldots,\mathcal{F}_u$
whose members, since $\mathcal{F}$ is intersecting, must each contain some fixed element,
namely that in the single member of $\mathcal{F}_1$.
\end{proof}

Considering the results of Theorem~\ref{thm:mine2} leads naturally to the following question.

\begin{question}\label{que:starornotstarthatisthequestion}
For each given value~$u$,
must each $(2,s)$-conditionally intersecting family $\mathcal{F}\subseteq \binom{[n]}{\leq u}$
of maximal size for such families
necessarily be of one of the forms (\ref{max form one})--(\ref{max form three})?
\end{question}

In general, this appears to be a difficult question.
We can however answer it positively for sufficiently large values $n$
with the next - and last - result of this paper, Theorem~\ref{thm:Family-size-comp}.

\begin{theorem}\label{thm:Family-size-comp}
Let $\mathcal{F} \subseteq \binom{[n]}{\leq u}$ be $(2,s)$-conditionally intersecting
for $\frac{s}{2} < u < s-1$.
If $s=2k$, then
\[
  |\mathcal{F}| \leq \binom{n-1}{k-1}+\sum_{i=k+1}^u \binom{n}{i} \,,
\]
and, for $n$ sufficiently larger than $s-r$,
equality holds if and only if, up to isomorphism,
\[
 \mathcal{F} = \Bigl\{ F\in \binom{[n]}{k} \,:\, 1\in F\Bigr\}
      \cup \{F\in 2^{[n]} \,:\, k<|F|\leq u\}\,.
\]
If $s=2k-1$, then
\[
  |\mathcal{F}| \leq \sum_{i=k}^u \binom{n}{i}\,,
\]
and, for $n$ sufficiently larger than $s-r$,
equality holds if and only if, up to isomorphism,
\[
  \mathcal{F} = \{F\in 2^{[n]} \,:\, k\leq |F|\leq u\}\,.
\]
\end{theorem}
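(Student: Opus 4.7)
The plan is to split $|\mathcal{F}| = |\mathcal{F}_{<k}| + |\mathcal{F}_{\geq k}|$, where $\mathcal{F}_j := \{F \in \mathcal{F} : |F| = j\}$, handle the case $\mathcal{F}_{<k} = \emptyset$ directly, and then show that any set of size less than $k$ forces $|\mathcal{F}|$ strictly below the target for $n$ sufficiently large. If $\mathcal{F}_{<k} = \emptyset$, then the $(2,s)$-condition on $\mathcal{F}_{\geq k}$ constrains only $\mathcal{F}_k$: when $s = 2k$, distinct $B,C \in \mathcal{F}_k$ satisfy $|B \cup C| \leq s$, so $\mathcal{F}_k$ is intersecting and $|\mathcal{F}_k| \leq \binom{n-1}{k-1}$ by the Erd\H{o}s-Ko-Rado~Theorem (valid for $n > 2k$); for $k < j \leq u$ any pair $B,C \in \mathcal{F}_j$ with $|B \cup C| \leq s$ automatically satisfies $|B \cap C| \geq 2(j-k) > 0$, so $|\mathcal{F}_j| \leq \binom{n}{j}$ is unconstrained. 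When $s = 2k-1$ the same observation shows that no $\mathcal{F}_j$ with $j \geq k$ is constrained. Summing gives the two upper bounds, and equality forces $\mathcal{F}_k$ to be a star (respectively all of $\binom{[n]}{k}$) and $\mathcal{F}_j = \binom{[n]}{j}$ for $k < j \leq u$, producing the displayed extremal families.

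To rule out $\mathcal{F}_{<k} \neq \emptyset$ at the extreme, I would fix any $A \in \mathcal{F}_r$ with $r < k$ and use that the $(2,s)$-condition forces $A \cap B \neq \emptyset$ for every $B \in \mathcal{F}$ with $|B| \leq s-r$. For each $j$ with $k \leq j \leq \min(u, s-r)$ this excludes all $j$-sets disjoint from $A$, yielding $|\mathcal{F}_j| \leq \binom{n}{j} - \binom{n-r}{j}$. The hypothesis $s/2 < u$ together with $r \leq k-1$ gives $\min(u, s-r) \geq k+1$ when $s = 2k$ and $\min(u, s-r) \geq k$ when $s = 2k-1$. In the $s = 2k$ case the intersecting bound $|\mathcal{F}_k| \leq \binom{n-1}{k-1}$ is already absorbed into the target, so the additional deficit must come from levels $j > k$ and totals at least $\binom{n-r}{k+1} = \Theta(n^{k+1})$; in the $s = 2k-1$ case the target admits all $k$-sets and the level-$k$ deficit alone furnishes $\binom{n-r}{k} = \Theta(n^k)$.

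Against this deficit, the crude bound $|\mathcal{F}_{<k}| \leq \sum_{r=1}^{k-1}\binom{n}{r} = O(n^{k-1})$ on the possible gain from small sets is of strictly lower polynomial order in both regimes, so for $n$ sufficiently large one obtains $|\mathcal{F}| < (\text{target})$, reducing the equality analysis to the $\mathcal{F}_{<k} = \emptyset$ treatment above. The main obstacle is exactly this asymptotic accounting: one must verify that the narrow range $s/2 < u < s-1$ is always wide enough to guarantee $\min(u, s-r) > k$ when $s = 2k$, so that the deficit can be extracted at a level strictly above the Erd\H{o}s-Ko-Rado-constrained $j = k$; and that when $s = 2k-1$ the level-$k$ deficit alone supplies a $\Theta(n^k)$ loss that dominates the $O(n^{k-1})$ potential gain from including small sets. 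This polynomial-order comparison is the technical heart of the proof.
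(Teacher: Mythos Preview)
Your argument is correct and takes a genuinely different route from the paper's. The paper, following its proof of Theorem~\ref{thm:mine}, replaces the small layers $\mathcal{F}_r$ (for $s-u\le r<k$) by the shadow families $\sigma_{s-r}\bigl((\mathcal{F}_r)^C\bigr)$, uses Katona's intersecting-shadow inequality to show these shadows are at least as large, invokes maximality of $\mathcal{F}$ to control the layers $\mathcal{F}_r$ with $r<s-u$ in terms of $\mathcal{F}_{k-1}$, and finally lets the ratio $\binom{n}{s-r}/\binom{n}{r}\to\infty$ force $|\mathcal{F}|<|\mathcal{F}'|$. Your approach bypasses both Katona's theorem and the shadow construction entirely: a single $A\in\mathcal{F}_r$ already forces every $B\in\mathcal{F}_j$ with $j\le s-r$ to meet $A$, and the resulting deficit $\binom{n-r}{k+1}$ (respectively $\binom{n-r}{k}$) at a single level above $k$ swamps the crude gain $\sum_{r<k}\binom{n}{r}=O(n^{k-1})$ from all small sets combined. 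Your polynomial-order bookkeeping is cleaner and more elementary; the paper's shadow argument has the virtue of being a direct continuation of the machinery built for Theorem~\ref{thm:mine}, and in principle tracks the threshold on $n$ more tightly, though neither proof makes that threshold explicit. One minor point: your check that the extremal families are $(2,s)$-conditionally intersecting should also cover cross-level pairs $B\in\mathcal{F}_j$, $C\in\mathcal{F}_{j'}$ with $j\neq j'$, but this follows by the same inclusion--exclusion computation you give for $j=j'$.
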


\begin{proof}
As in the proof of Theorem~\ref{thm:mine},
the bounds of Theorem~\ref{thm:Family-size-comp} hold
when $\mathcal{F}$ contains no set of size less than $k = \lceil\frac{s}{2}\rceil$,
as do the characterisations of the extremal families.

Suppose then that $\mathcal{F}_r\neq \emptyset$ for some positive integer $r<k$.
To conclude the proof, we will prove that $\mathcal{F}$ cannot achieve
the bounds of the theorem for $n$ sufficiently larger than~$s$.
By adding sets to $\mathcal{F}$,
we may suppose that $\mathcal{F}$ is maximal among $(2,s)$-conditionally intersecting subfamilies~of~$\binom{[n]}{\leq u}$
that contain some member of size strictly less than $k$.
Define
\[
  \mathcal{F}' = \biggl(\mathcal{F} \cup \bigcup_{r=s-u}^{k-1} \sigma_{s-r}\bigl((\mathcal{F}_r)^C\bigr)\biggr)
                     - \bigcup_{r=1}^{k-1} \mathcal{F}_r\,.
\]
By the proof of Theorem~\ref{thm:mine},
$\mathcal{F}'$ is $(2,s)$-conditionally intersecting.
Since $\mathcal{F}'$ contains no set of size smaller than $k$ or any set larger than $u$,
the first part of the proof shows that $|\mathcal{F}'|$ satisfies the bounds of the theorem.
We will prove that $|\mathcal{F}| < |\mathcal{F}'|$ for $n$ sufficiently larger than~$s$,
implying that $|\mathcal{F}|$ cannot achieve the given bounds.

As in the proof of Theorem~\ref{thm:mine},
$\mathcal{F}\cap \sigma_{s-r}\bigl((\mathcal{F}_r)^C\bigr) = \emptyset$ for each $r$,
so
\[
    |\mathcal{F}'|
  = |\mathcal{F} | + \sum_{r=s-u}^{k-1} \bigl|\sigma_{s-r}\bigl((\mathcal{F}_r)^C\bigr)\bigr| - \sum_{r=1}^{k-1} |\mathcal{F}_r|\,.
\]
Thus to prove that $|\mathcal{F}| < |\mathcal{F}'|$ for $n$ sufficiently larger than $s-r$,
we must prove that
\begin{equation}\label{eqn: required eqn}
  \sum_{r=r'}^{k-1}|\mathcal{F}_r| < \sum_{r=s-u}^{k-1}\bigl|\sigma_{s-r}\bigl((\mathcal{F}_r)^C\bigr)\bigr|
\end{equation}
for $n$ sufficiently larger than $s-r$,
where $r'$ is the smallest integer for which $\mathcal{F}_{r'}\neq\emptyset$.
By the proof of Theorem~\ref{thm:mine},
(\ref{eqn: required eqn}) is true when $r' \geq s-u$,
so suppose that $r' < s-u \,(\leq k)$.
For each integer $r$ with $s-u \leq r \leq k-1$,
Theorem~\ref{Katona} implies that
\[
       \frac{\binom{n}{s-r}}{\binom{n}{r}} |\mathcal{F}_r|
  \leq \bigl|\sigma_{s-r}\bigl((\mathcal{F}_r)^C\bigr)\bigr| \,.
\]
Furthermore for such $r$,
we have $r \leq k-1 \leq (2k-1)-k = s-k < s-r$,
so $\binom{n}{s-r}/\binom{n}{r} \to \infty$ for $n-(s-r) \rightarrow \infty$.

For each $r = s-u \,(\,> r^\prime),\ldots, k-1$
and $r'' = r', \ldots , s-u-1$,
consider any $A \in \mathcal{F}_{r''}$ and $B \supset A $ with $|B| = r$.
If $B \notin \mathcal{F}_{r}$,
then adding $B$ to $\mathcal{F}$ would create a larger
$(2,s)$-conditionally intersecting subfamily of $\binom{[n]}{\leq u}$
with at least one member of size less than~$k$, contradicting the maximality of $\mathcal{F}$.
Hence, $B \in \mathcal{F}_{r}$.
Double counting the pairs $(A,B)$ with $A \in \mathcal{F}_{r''}$ and $A \subset B \in \mathcal{F}_{r}$
yields the inequality
\[
  |\mathcal{F}_r| \binom{r}{r''} \geq \binom{n-r''}{r-r''}|\mathcal{F}_{r''}|\,.
\]
In particular, there is a constant $C$ such that
$\sum_{r=r'}^{s-u-1} |\mathcal{F}_r| < C|\mathcal{F}_{k-1}|$
for sufficiently large $n$.
Hence, for $n$ sufficiently larger than $s-r$,
\[
       \sum_{r=r'}^{k-1} |\mathcal{F}_r|
    <  \sum_{r=s-u}^{k-1}\frac{\binom{n}{s-r}}{\binom{n}{r}}|\mathcal{F}_{r}|
  \leq \sum_{r=s-u}^{k-1}\bigl|\sigma_{s-r}\bigl((\mathcal{F}_r)^C\bigr)\bigr|\,,
\]
and thus $|\mathcal{F}|<|\mathcal{F}'|$.
This concludes the proof.
\end{proof}

\medskip

\section*{Acknowledgements}
We thank the referees for their corrections and suggestions which improved the presentation of this paper.

\bibliographystyle{plain}
\bibliography{OnMubayisConjectureAndConditionallyIntersectingSetsFinal3}
\end{document}